\theoremstyle{plain}
\newtheorem{thm}{Theorem}[section]
\newcommand{\newsharedtheorem}[3]{%
  \newaliascnt{#1}{thm}%
  \newtheorem{#1}[#1]{#2}%
  \aliascntresetthe{#1}%
  \crefname{#1}{#2}{#3}%
}
\theoremstyle{definition}
\theoremstyle{plain}
\newtheorem*{theorem*}{Theorem}
\crefname{step}{Step}{Steps}
\newcommand{\zset}{\mathbbm{Z}} %Z
\newcommand{\rset}{\mathbbm{R}} %R
\newcommand{\cset}{\mathbbm{C}} %C
\newcommand{\zp}{\mathbf{Z}_{p}}  %Z_p
\newcommand{\qp}{\mathbbm{Q}_{p}}  %Q_p
\newcommand{\qpb}{\overline{\mathbbm{Q}}_{p}}
\newcommand{\ql}{\mathbbm{Q}_{\ell}}  %Q_l
\newcommand{\zl}{\zset_{\ell}}
\newcommand{\fpb}{\overline{\mathbbm{F}}_{p}}  %Fp algebraiclosed
\newcommand{\fq}{\mathbbm{F}_{q}}  %Fq
\newcommand{\flb}{\overline{\mathbbm{F}}_{\ell}}  %Fl algebraiclosed
\newcommand{\ok}[2][]{#1\mathcal{O}_{#2}}
\newcommand{\ind}[4][]{\mathrm{Ind}#1_{#2}^{#3}(#4)}
\newcommand{\tens}[4][]{#1#2\otimes_{#3}#4}
\newcommand{\tensl}[4][]{#1#2\otimes^L_{#3}#4}
\newcommand{\coho}[4][]{#1 H^{#2}(#3, #4)}
\newcommand{\ccoho}[4][]{#1 H_c^{#2}(#3, #4)}
\newcommand{\gl}[3][]{#1\mathrm{GL}_{#2}({#3})}  %Gln
\newcommand{\gm}{\mathbf{G}_m}
\newcommand{\rphtc}{R\pi_{HT,*}\flb}
\newcommand{\flg}{\mathcal{F}\ell}
\title[]{On the non-generic part of cohomology of compact unitary Shimura varieties of signature $(1,n)$}
\date{\today}
\author{Kun Liu}
\address{Department of Mathematics, University of Chicago, Chicago, IL 60637}
\email{liukun801@uchicago.edu}
\begin{document}
\maketitle
\begin{abstract}
In this short note, we prove a result about the {\em non-generic} part of the cohomology of certain compact unitary Shimura varieties for good $p$, partially extending a result of Boyer in the case of Harris--Taylor unitary Shimura varieties. Our arguments are different to those of Boyer --- we work in the context of the work of Fargues--Scholze, using ideas introduced by Koshikawa to study the generic part of cohomology.
\end{abstract}
%%\tableofcontents
\section{Introduction}
\subsection{Background: Generic Vanishing}
In \cite{caraiani2017generic}, Caraiani and Scholze prove that, under certain generic conditions, the $\flb$ cohomology of compact unitary Shimura varieties is concentrated in the middle degree. A key input is the `perversity'(up to shift) of $\rphtc$ given by the Hodge-Tate period map $$\pi_{HT}:\operatorname{Sh}_{K^p}\to\flg.$$ This ultimately rests on the `affineness' and `partial properness' nature of $\pi_{HT}$. They also analyze the cohomology of the Igusa varieties using trace formulas.

Then, Koshikawa \cite{koshikawa2021generic} introduces a new approach using Fargues--Scholze parameters. This method is closely related to the cohomology of local Shimura varieties rather than the Igusa variety. He then proves the local vanishing for the generic part and suggests possibilities for the non-generic case. It is these possibilities that we are exploring in this note.

More recently, Zhang \cite{zhang2023pel} constructs the Igusa stack sitting in a diagram:  

$$\begin{tikzcd}
\operatorname{Sh}_{K^p}^{\circ,\diamond} \arrow[d] \arrow[r, "\pi_{HT}^{\diamond}"] &\flg^{\diamond} \arrow[d, "BL"] \\
\mathrm{Igs} \arrow[r, "\bar{\pi}_{HT}"]         & \mathrm{Bun}_{G,\mu^{-1}}   .
\end{tikzcd}$$

By base change, we have that $R\pi_{HT,*}^{\diamond}\flb=BL^*R\bar{\pi}_{HT,*}\flb$ where $BL$ is the Beauville--Laszlo map. Since the Igusa variety (essentially the fiber of $\pi_{HT}$) is affine, this establishes the genuine perversity for $R\bar{\pi}_{HT,*}^{\diamond}\flb $ on $\mathrm{Bun}_G$. 

Moreover, $BL^*$ almost agrees with the Hecke operator acting on $\mathrm{Bun}_G$ as defined in \cite{fargues2021geometrization}. Hamann--Lee \cite{hamann2023torsion} prove that after localizing at generic parameters, the Hecke operator is $t$-exact, working primarily on the $\mathrm{Bun}_G$ side. This directly implies torsion vanishing outside the middle degree. In a similar vein, Zou \cite{zou2025categorical} and Nguyen \cite{nguyen2025categorical} deal with generalized generic parameters (including ramified ones) for $\mathrm{GL_n}$, involving an analysis of categorical local Langlands (see also \cite{nguyen2023categorical} for related discussions). With recent progress on categorical local Langlands by Zhu \cite{zhu2025tame}, Yang--Zhu \cite{yang2025generic} prove torsion vanishing for unramified generic parameters for more general groups in a more categorical way.

\subsection{Non-Generic Case}
Arthur’s conjecture posits a relationship between the Weil group representation equipped with the Lefschetz-$\mathrm{SL}_2$ structure and the automorphic representation occurring in the $L^2$ cohomology of Shimura varieties. One may expect a similar version for torsion coefficients related to the non-generic case. In short, we hope there is a relationship between the $\mathrm{SL}_2$ action for the parameter corresponding to an automorphic representation and the cohomological degree where it appears.

This has been proved in the Harris--Taylor case by Boyer \cite{boyer2019torsion}, and for Hilbert modular varieties by Caraiani and Tamiozzo \cite{caraiani2023etale}. Recently, Koshikawa and Shin \cite{koshikawa2024non} formulate conjectures in this direction. They also use local-global compatibility to transfer results from Archimedean places to deal with the characteristic 0 case. 

In \cite{nguyen2025categorical}, Nguyen establishes general vanishing results for unitary Shimura varieties of signature $(1,n)$, covering non-generic parameters that are not necessarily unramified. 

\begin{rmk}
The argument of \cite{nguyen2025categorical} uses Hodge--Newton reducibility to extend the vanishing results from the generalized generic case. The present note treats the narrower setting of unramified non-generic parameters, which allows for a specialized and direct argument.
\end{rmk}

\subsection{Main Results} We reprove Boyer's result for split $p$ by an argument in the spirit of \cite{koshikawa2021generic} for compact unitary Shimura varieties $\operatorname{Sh}_K$ of signature $(1,n)$. Choosing $\qpb\cong\cset$ appropriately, let $r_{\mu}$ be the highest weight representation corresponding to $\mu$. We consider $\coho i{\operatorname{Sh}_K}{\flb}$, and we assume $\ell> n+1$
(see Sections 3–4 for the precise hypotheses):

\begin{thm}[\Cref{main}]
Assume exactly one archimedean place has signature $(1,n)$. Let $p\not= \ell$ and $p>n+1$. Assume $K_p$ is hyperspecial and $p$ satisfies additional split and unramified condition. Let $H_{K_p}$ be the Hecke algebra at $p$ and $\mathfrak{m}\subset H_{K_p}$ be the maximal ideal corresponding to an unramified L-parameter $\rho_{\mathfrak{m}}$. For $1\leq r\leq n$, if there is no $\alpha$ such that the multiset $\{\alpha,p\alpha,p^2\alpha,\dots,p^{r}\alpha\}$ is contained in the multiset of eigenvalues of $r_{\mu}\circ\rho_{\mathfrak m}(\mathrm{Frob})$, then for $i\leq n-r$ or $i\geq n+r$, $$\coho i{\operatorname{Sh}_K}{\flb}_{\mathfrak m}=0.$$
\end{thm}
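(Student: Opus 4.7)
The plan is to follow the strategy of Koshikawa \cite{koshikawa2021generic}, extended to the non-generic setting. By the Igusa stack diagram of Zhang \cite{zhang2023pel} recalled above, one has $R\pi_{HT,*}^{\diamond}\flb \cong BL^* R\bar{\pi}_{HT,*}\flb$, so $\coho i{\operatorname{Sh}_K}{\flb}_{\mathfrak m}$ can be computed from $R\bar\pi_{HT,*}\flb$ on $\mathrm{Bun}_{G,\mu^{-1}}$, which is genuinely perverse (up to shift) thanks to the affineness of the Igusa variety. I would then use the Newton stratification $\mathrm{Bun}_G = \bigsqcup_{b \in B(G,\mu)} \mathrm{Bun}_G^b$; under the signature $(1,n)$ and split/unramified assumptions at $p$, the local datum is essentially $\mathrm{GL}_{n+1}$ with minuscule cocharacter $\mu = (1, 0, \ldots, 0)$, so $B(G,\mu)$ consists of $n+1$ strata $b_0, \ldots, b_n$ indexed by Newton polygons refining $\mu^{\diamond}$.

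Next I would invoke the Fargues--Scholze spectral action together with the categorical local Langlands results of Hamann--Lee \cite{hamann2023torsion} and Zou \cite{zou2025categorical} to localize $R\bar\pi_{HT,*}\flb$ at the maximal ideal $\mathfrak m$. On each stratum $\mathrm{Bun}_G^{b_k}$, the restriction of the localized sheaf is controlled by the cohomology of the local Shimura variety $\mathcal M_{G, b_k, \mu}$, on which the Frobenius eigenvalues take the form $p^j \alpha$ for $\alpha$ an eigenvalue of $\rho_{\mathfrak m}(\mathrm{Frob})$ and $j$ ranging through an arithmetic progression determined by the slope of $b_k$. This is the decategorified content of the Kottwitz/Mantovan formula, and is essentially the unramified local input used by Koshikawa in the generic case.

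The hypothesis that no $\alpha$ makes $\{\alpha, p\alpha, \ldots, p^r\alpha\}$ fit inside the eigenvalue multiset of $r_\mu \circ \rho_{\mathfrak m}(\mathrm{Frob})$ will then translate, via this eigenvalue description, to the vanishing of the localization on the strata $\mathrm{Bun}_G^{b_k}$ for $k$ outside a central window of size $2r-1$; the intuition is Arthur-theoretic, as a length-$(r{+}1)$ progression would correspond precisely to a Jordan block of size $r{+}1$ in the $\mathrm{SL}_2$-part of the Arthur parameter. Combined with the codimensions of the surviving strata, the perversity of $R\bar\pi_{HT,*}\flb$, and Poincar\'e duality to symmetrize the bound, this yields cohomological support only in the range $n - r < i < n + r$. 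The case $r = 1$ recovers Koshikawa's generic vanishing; for larger $r$ the argument proceeds by induction on $r$, peeling off one admissible stratum at a time.

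The main obstacle I anticipate is making the eigenvalue analysis rigorous beyond the semisimplified level. Koshikawa's argument in the generic setting is at heart about semisimple Frobenius data, whereas controlling the full cohomology complex on each non-basic stratum requires ruling out Jordan-block degeneracies in $\rho_{\mathfrak m}$ of forbidden size, plausibly via the perverse $t$-structure / tame categorical statements of Zhu \cite{zhu2025tame}. A related subtlety is checking the compatibility of Fargues--Scholze parameters with the classical Satake parameters defining $\mathfrak m$ at the hyperspecial level $K_p$, and tracking the Jacquet--Langlands transfer of $\rho_{\mathfrak m}$ to each inner form $J_{b_k}$ cleanly in the integral setting $\ell > n+1$, where reducibility phenomena on the spectral side must be controlled.
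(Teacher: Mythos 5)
Your skeleton — localize at $\mathfrak m$, kill the deep Newton strata by an eigenvalue-chain argument at $p$, bound the degrees of the surviving strata, then dualize — is indeed the shape of the paper's argument, but two of your key steps are not right as stated. First, the local input is not the ``decategorified content of the Kottwitz/Mantovan formula,'' and it cannot be obtained from the Hamann--Lee/Zou $t$-exactness results, which are only available after localizing at \emph{generic} parameters and so are exactly what is unavailable here. What the paper actually proves (Section 2 together with the lemma in Section 3) is a representation-theoretic statement about $J_{b_i}(\qp)=D^*_{1/i}\times\mathrm{GL}_{n+1-i}(\qp)$: any irreducible smooth $\flb$-representation has supercuspidal component on the division-algebra factor, hence lifts to characteristic $0$; Jacquet--Langlands plus compatibility of the Fargues--Scholze parameter (Koshikawa's matching of the Hecke action with the spectral Bernstein center, which needs no genericity) gives an indecomposable integral Weil--Deligne representation $\rho_0\otimes\mathrm{Sp}(i)$; and a mod-$\ell$ analysis using the unramifiedness of $\rho_{\mathfrak m}$ and $\ell,p>n+1$ forces $\bar\rho_0$ to be a sum of powers of $\chi_{cycl}$, producing the chain $\{\alpha,p\alpha,\dots,p^{i-1}\alpha\}$. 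You flag this eigenvalue analysis as ``the main obstacle'' but propose no mechanism for it (Zhu's tame categorical results are not the relevant tool), so the heart of the theorem is missing from your argument.

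Second, your structural picture of which strata survive is wrong: the strata killed by the hypothesis are $b_i$ with $i\geq r+1$, so the surviving strata are the $r$ strata nearest the $\mu$-ordinary one (small flag strata, large Igusa varieties), not the complement of a ``central window of size $2r-1$.'' With the wrong surviving set the numerology of your perversity-plus-codimension argument cannot come out, and in any case you never carry it out. The paper's degree bound is more elementary and explicit: by Mantovan's formula (in the form of Daniels--van Hoften--Kim--Zhang) the graded piece for $b_t$ is $R\Gamma(Ig^{b_t},\flb)\otimes^L_{C_c(J_{b_t}(\qp))}R\Gamma_c(\mathcal M_{(G,b_t,\mu),K_p},\flb(n+1-t))[2(n+1-t)]$; the local Shimura variety is smooth partially proper of dimension $n$, so its compactly supported cohomology sits in $[0,2n]$, the Igusa variety is affine of dimension $n+1-t$, so its cohomology sits in $[0,n+1-t]$, and the shift places the whole graded piece in $[0,n+t-1]\subset[0,n+r-1]$; duality then gives the lower bound $i\leq n-r$. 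No induction on $r$ is needed. If you repair the local representation-theoretic lemma and replace the perversity heuristics by these explicit bounds (or honestly quantify the perverse amplitude over the correct surviving strata), your outline becomes the paper's proof.
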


Furthermore, if one allows signatures of the form $(1,n)$ to occur in several places, then a straightforward calculation gives us:
\begin{thm}[\cref{main2}]
With similar conditions as above, If there is no $\alpha$ such that the multiset $\{\alpha,p\alpha,p^2\alpha,\dots,p^{r}\alpha\}$ is contained in the multiset of the eigenvalues of $r_{\mu}\circ\rho_{\mathfrak m}(\mathrm{Frob})$, then $\coho i{\operatorname{Sh}_K}{\flb}_{\mathfrak m}=0$, for $i\leq N-r$ or $i\geq N+r$ where $N=\mathrm{dim}\ \operatorname{Sh}_K$.
\end{thm}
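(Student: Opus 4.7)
The plan is to reduce \cref{main2} to \cref{main} by exploiting the product structure of the local data at $p$ within the Fargues--Scholze framework. Under the split and unramified hypotheses, the group $G_{\qp}$ decomposes as a product $\prod_j G_j$ indexed by the places $j$ of the CM field above $p$, and the cocharacter and representation factor accordingly as $\mu = (\mu_j)_j$ and $r_\mu = \boxtimes_j r_{\mu_j}$. Each archimedean place of signature $(1,n)$ picks out a factor on which $\mu_j$ is the minuscule cocharacter $(1,0,\ldots,0)$ of $\mathrm{GL}_{n+1}$ and $r_{\mu_j}$ is the standard representation, while the remaining (definite) places contribute trivially since their associated flag varieties are points. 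Correspondingly, $\mathrm{Bun}_G = \prod_j \mathrm{Bun}_{G_j}$ and $\flg_{G,\mu} = \prod_j \flg_{G_j,\mu_j}$, and the Hecke operator $T_\mu$ on $\mathrm{Bun}_G$ is the external product of the $T_{\mu_j}$.

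Tracing through the Beauville--Laszlo diagram of \cite{zhang2023pel} and the argument used to establish \cref{main}, the localized cohomology $\coho i{\operatorname{Sh}_K}{\flb}_{\mathfrak m}$ then inherits a Künneth-type decomposition over the indefinite factors. For each such place $j$, let $L_j \geq 0$ denote the largest integer such that $\{\alpha_j, p\alpha_j, \ldots, p^{L_j}\alpha_j\}$ is contained in the multiset of eigenvalues of $r_{\mu_j} \circ \rho_{\mathfrak m,j}(\mathrm{Frob})$ for some $\alpha_j$. Applying \cref{main} at the $j$-th factor with parameter $L_j + 1$ shows that the $j$-th contribution is concentrated in degrees $[n - L_j, n + L_j]$, so by Künneth the combined cohomology is concentrated in $[N - \sum_j L_j, N + \sum_j L_j]$, where $N = \dim \operatorname{Sh}_K$ is the sum of $n$ over the indefinite places.

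The remaining ingredient is an elementary eigenvalue computation. Given local chains $\{\alpha_j, \ldots, p^{L_j}\alpha_j\}$, the element $\gamma := \prod_j \alpha_j$ together with the identities $p^m \gamma = \prod_j p^{m_j}\alpha_j$ (for any decomposition $m = \sum m_j$ with $0 \leq m_j \leq L_j$) exhibits the chain $\{\gamma, p\gamma, \ldots, p^{\sum_j L_j}\gamma\}$ among the eigenvalues of $r_\mu \circ \rho_{\mathfrak m}(\mathrm{Frob})$, since the latter are precisely products of eigenvalues from each factor. The hypothesis of \cref{main2} therefore forces $\sum_j L_j \leq r - 1$, which together with the Künneth bound gives $\coho i{\operatorname{Sh}_K}{\flb}_{\mathfrak m} = 0$ for $i \leq N - r$ or $i \geq N + r$. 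The main obstacle is establishing the Künneth-type decomposition rigorously: one must check that the sheaves on $\prod_j \mathrm{Bun}_{G_j}$ arising from the Igusa stack and the localization at $\mathfrak m$ admit an external tensor product description compatible with the Hecke action. Once this is in place, the eigenvalue combinatorics and the degree bookkeeping above constitute the \emph{straightforward calculation} referenced in the statement.
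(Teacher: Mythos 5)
Your eigenvalue combinatorics (concatenating the local chains $\{\alpha_j,\dots,p^{L_j}\alpha_j\}$ into one chain for the tensor product $r_\mu=\boxtimes_j r_{\mu_j}$, since eigenvalues of the tensor product are products of eigenvalues) is correct and is exactly the content of \cref{ssig} in the paper. But the reduction of \cref{main2} to \cref{main} via a K\"unneth decomposition of the localized global cohomology is a genuine gap, and I do not see how to repair it in the form you state. The product structure at $p$ lives only on the local side: $G_{\qp}$, $\mathrm{Bun}_G$, $\flg_{G,\mu}$ and the local Shimura varieties factor over the places above $p$, but the global Shimura variety, the Igusa stack, and the sheaf $R\bar{\pi}_{HT,*}\flb$ do not. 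The Igusa variety $Ig^{b_I}$ attached to a product stratum $b_I=b_{i_1}\times\cdots\times b_{i_s}$ is a single global object (built from the special fiber of $\operatorname{Sh}_K$), not a product of Igusa varieties for the factors, so the external tensor product description you flag as ``the main obstacle'' is not available. Moreover, ``applying \cref{main} at the $j$-th factor'' has no meaning: \cref{main} is a statement about a different global Shimura datum (exactly one indefinite place), and the hypothetical $j$-th K\"unneth factor is not the cohomology of any Shimura variety, so its concentration in $[n-L_j,\,n+L_j]$ cannot be quoted from \cref{main}.

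The paper avoids this entirely: the factorization is used only at the level of Fargues--Scholze parameters of irreducible representations of $J_{b_I}(\qp)=\prod_t J_{b_{i_t}}(\qp)$, each factor satisfying $P_{i_t}$, which gives \cref{ssig} (your third paragraph, in effect). The global step is then the same single Mantovan filtration as in the proof of \cref{main}, applied to the strata $b_I$ of the one group $G$: if $r_\mu\circ\rho_{\mathfrak m}$ fails $P_{r+1}$, only strata with $|I|\le r-1$ survive localization, and for those the graded piece
$\tensl{R\Gamma(Ig^{b_I},\flb)^{\mathrm{op}}}{C_c(J_{b_I}(\qp))}{R\Gamma_c(\mathcal{M}_{(G,b_I,\mu),K_p},\flb(d_{b_I}))[2d_{b_I}]}$
is bounded above in degree by $2N-2d_{b_I}+d_{b_I}=N+|I|\le N+r-1$, using that $\mathcal{M}_{(G,b_I,\mu),K_p}$ has dimension $N$ (so $R\Gamma_c$ lies in $[0,2N]$) and that $Ig^{b_I}$ is affine of dimension $d_{b_I}=N-|I|$; duality then gives the bound $i\le N-r$. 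If you replace your K\"unneth step by this direct degree count on the product strata, your argument becomes the paper's.
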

\subsection{Overview of proof}
In Section \ref{sec2}, we prove a representation result describing how representations coming from division algebras look like. Then, in Section \ref{sec3}, using \cite{koshikawa2021generic}, we can match the Hecke action with the action of the spectral Bernstein center, which forces local vanishing after localizing at suitable Hecke eigensystems on strata of large dimensions by representation input. Finally, in Section \ref{sec4}, Mantovan’s formula provides us with the required filtration to pass from strata to the global variety. 

\begin{rmk}
In this specific case, the degree bound of the partially compact support cohomology of the Igusa variety (which, in our case, is just the usual cohomology) by affineness and the degree shift by the $J_b^0$ (connected) torsor structure of the local Shimura variety would already yield a result.
\end{rmk}

\subsection{Further Remarks}
As a point of reference, I share my current understanding of Boyer’s idea, which has benefited greatly from explanations by Koshikawa. In \cite{boyer2019torsion}, he establishes the vanishing by analyzing the mod $p$ special fiber of the Harris–Taylor Shimura variety. He used the fact that the stratification is a parabolic induction of small space (related to Hodge--Newton reducibility in more general contexts). Then, noticing that the Lubin–-Tate space at the hyperspecial level is essentially a disk, he obtains the appearance of $Speh_r(1)$ for the cohomology of the corresponding stratum. 

Combining this geometric input with Mantovan's formula yields a proof of the main theorem that is considerably shorter (and works under weaker conditions on $\ell$ and $p$) and essentially recovers Boyer’s argument. Our approach, however, does not rely on this input and may be more flexible for general signatures. Moreover, Boyer also proves a stronger statement related to the real Galois action on the \'etale cohomology, formulated as in \cite[Theorem~1.3]{koshikawa2020vanishing}.

\section{Representation Result}\label{sec2}
First, we prove the representation result needed for our conclusion. In particular, we want to prove that Galois representations will be of the form $\oplus_{i=0}^{n-1}\chi_{cycl}^i$ (mod $\ell$) for those arising from a division algebra of degree $n$ via the Jacquet–Langlands correspondence under certain conditions.

\begin{rmk}
After completing the argument in the special case needed here, we learned that a more general result had already been proved by Vign\'eras \cite{vigneras1997propos}. We nevertheless present the argument in this restricted setting.
\end{rmk}

We say that a Weil--Deligne representation is integral if it admits an invariant lattice.

\begin{thm}\label{rep}
Let $F$ be a non-Archimedean field with residue field $\fq$ of characteristic $p\not=\ell$. Let $\rho=(\rho',N)$ be an n-dimensional, indecomposable, integral, $\ell$-adic, Frobenius-semisimple Weil--Deligne representation of $W_F$. Then the semisimplification of the modulo $\ell$ reduction of $\rho'$ is $\oplus_{i=0}^{r-1}\tens{\bar{\rho}^{ss}_0}{}{\chi_{cycl}^i}$ for some irreducible representation $\rho_0$ of $W_F$.

Assuming further that $(\bar\rho')^{ss}$ is unramified, then $\bar\rho_0^{ss}$ is also unramified.

Moreover, if $(\ell,n)=1$ and $(p,n)=1$, then up to a twist by an unramified character, $\bar{\rho}_0=\oplus_{i=0}^{d-1}\chi_{cycl}^i$ with $d$ being the smallest integer such that $\ell|q^d-1$ or $d=1$. Hence, $$\bar\rho'=\oplus_{i=0}^{dr-1}\chi_{cycl}^i.$$
\end{thm}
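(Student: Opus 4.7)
The plan is as follows. By the standard classification of indecomposable Frobenius-semisimple Weil--Deligne representations, $(\rho',N) \cong \mathrm{Sp}_r \otimes \rho_0$ for a unique irreducible $W_F$-representation $\rho_0$ and integer $r \ge 1$ with $n = r\cdot\dim\rho_0$. Hence $\rho' \cong \bigoplus_{i=0}^{r-1}\rho_0\otimes \chi_{cycl}^i$, and since integrality of $\rho$ forces integrality of $\rho_0$, reducing mod $\ell$ and semisimplifying yields the first claim. For the second claim, $\chi_{cycl}$ is unramified (as $p\neq\ell$), so if $(\bar\rho')^{\mathrm{ss}}|_{I_F}$ is trivial then the summand $\bar\rho_0^{\mathrm{ss}}\otimes 1$ is trivial on $I_F$ as well.

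For the final claim, set $m = \dim\rho_0$, so $m\mid n$ and hence $(p,m) = (\ell,m) = 1$. I first reduce to the tame case. The wild inertia $P_F$ is pro-$p$, so $\rho_0|_{P_F}$ factors through a finite $p$-group and is semisimple (as $\ell\neq p$). Since $P_F\trianglelefteq W_F$, Clifford theory together with $p\nmid m$ forces every $P_F$-isotypic component of $\rho_0$ to be $1$-dimensional; that is, $\rho_0|_{P_F}$ is a direct sum of characters. These characters take values in $p$-power roots of unity, which inject into $\flb^\times$ under reduction, so triviality of $\bar\rho_0^{\mathrm{ss}}|_{I_F}$ already forces them to be trivial. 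Therefore $\rho_0$ factors through the tame quotient $W_F^t = I_F^t\rtimes\langle\sigma\rangle$.

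Now $I_F^t$ is abelian, so $\rho_0|_{I_F^t} \cong \chi_1\oplus\cdots\oplus\chi_m$, with $\sigma$ permuting the $\chi_i$ via $\chi\mapsto\chi^q$ (using the relation $\sigma\tau\sigma^{-1} = \tau^q$). Irreducibility of $\rho_0$ forces a single Frobenius orbit of size $m$, and $\rho_0 \cong \mathrm{Ind}_{W_{F_m}}^{W_F}\widetilde\chi_1$ for the unramified degree-$m$ extension $F_m/F$ and some extension $\widetilde\chi_1$ of $\chi_1$. Each $\chi_i$ reduces to the trivial character mod $\ell$, so $\chi_1$ has $\ell$-power order $\ell^s$, and the orbit-size condition becomes: $m$ equals the multiplicative order of $q$ modulo $\ell^s$. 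Since $(\ell,m)=1$, this forces $s\in\{0,1\}$: either $\chi_1=1$ with $m=1$, or $\chi_1$ has exact order $\ell$ with $m=d$ the multiplicative order of $q$ mod $\ell$.

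Finally, $\bar\rho_0 \cong \mathrm{Ind}_{W_{F_d}}^{W_F}\overline{\widetilde\chi}_1$ with $\overline{\widetilde\chi}_1$ unramified, determined by its value $\alpha\in\flb^\times$ at $\mathrm{Frob}_{F_d}$. A direct computation realizes $\mathrm{Frob}_F$ on the induced representation as a companion matrix with characteristic polynomial $X^d - \alpha$, whose roots are $\{\beta q^i\}_{i=0}^{d-1}$ for any $d$-th root $\beta$ of $\alpha$, since $q$ has multiplicative order $d$ in $\flb^\times$ and hence generates all the $d$-th roots of unity there. This identifies $\bar\rho_0$ with $\bigoplus_{i=0}^{d-1}\chi_{cycl}^i$ up to the unramified twist sending $\mathrm{Frob}_F\mapsto \beta$, and the formula for $\bar\rho'$ follows at once. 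I expect the delicate step to be the combination of Step~3 and Step~4: correctly applying Clifford theory to pin down $\rho_0$ as induced from an unramified extension, and matching the Frobenius eigenvalues of the resulting induction with powers of $\chi_{cycl}$.
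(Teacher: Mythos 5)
Your argument is correct in substance but takes a genuinely different route from the paper. The paper also begins with the decomposition $\rho'\cong\rho_0\otimes\mathrm{Sp}(r)$ (using $N^{r-1}\Lambda$ to see that $\rho_0$ is integral), but it then quotes Tate's theorem that an irreducible representation of $W_F$ of dimension prime to $p$ is monomial, writes $\rho_0=\mathrm{Ind}_{W_L}^{W_F}\chi$ with $\chi$ not extendable to any intermediate field, and analyzes $\chi$ through local class field theory: the compatibility of $W_{L}^{ab}\twoheadrightarrow k_L^{\times}$ with norm maps shows that $\bar\chi=1$, that $L/F$ must be unramified (otherwise $\chi$ would extend down a totally ramified step, the norm on residue fields being $x\mapsto x^a$ with $(a,\ell)=1$), and that $[L:F]$ is exactly the order of $q$ mod $\ell$ (otherwise the norm $x\mapsto x^{1+q^c+\cdots+q^{(h-1)c}}\equiv x^{h}$ on the $\ell$-part lets $\chi$ extend down an unramified step). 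You instead dispose of wild inertia by Clifford theory ($p\nmid m$ forces the $P_F$-constituents to be characters with values in $p$-power roots of unity, which reduce injectively mod $\ell$), then invoke the classification of irreducible tame representations as inductions from the unramified extension $F_m$ of characters of tame inertia with Frobenius orbit of full length $m$, and finish with the companion-matrix computation. Both routes are valid; yours avoids Tate's monomiality theorem and the norm-map bookkeeping, while the paper's treatment handles the a priori possible ramified inductions more uniformly.

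One intermediate assertion of yours is false as stated: $(\ell,m)=1$ does \emph{not} force the order $\ell^{s}$ of $\chi_1$ to satisfy $s\le 1$, because the multiplicative order of $q$ modulo $\ell^{s}$ can be prime to $\ell$ even when $s\ge 2$ (e.g.\ $q=3$, $\ell=11$: $3^{5}\equiv 1\ (\mathrm{mod}\ 11^{2})$, so the order mod $11^{2}$ is $5$). The proof survives because the assertion is not what you actually need: since $\mathrm{ord}_{\ell^{s}}(q)=d\cdot\ell^{j}$ with $d=\mathrm{ord}_{\ell}(q)$, the hypothesis $(\ell,m)=1$ together with $m=\mathrm{ord}_{\ell^{s}}(q)$ gives $j=0$, hence $m=d$ (or $m=1$) directly, and your final step only uses $m=d$ and the unramifiedness of $\overline{\widetilde\chi}_1$, not $s\le 1$. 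Replace that sentence by this observation and the argument is complete.
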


\begin{rmk}
We hope this result could also be a possible step towards cases of general signatures. In fact, the existence of irreducible $\rho_0$ forces $\chi_{cycl}^d=1\ (\mathrm{mod}\ \ell)$, making it easier to find long sequences of the form $\{\alpha,\alpha q,\alpha q^2,\dots,\alpha q^u\}$. Also, the requirement that $d$ is the smallest integer such that $\ell|q^d-1$ suggests that this could only happen at a fixed degree. So the discussion will mainly focus on representations purely of the form $\mathrm{Sp}(r)$. 
\end{rmk}

\begin{proof}
We may assume that the coefficients take values in $M$ for some finite extension $M$ of $\ql$, and $\Lambda$ is an invariant lattice. Since the representation is $\mathrm{Frob}$-semisimple and indecomposable, it can be written as $\tens{\rho_0}{}{}\mathrm{Sp}(r)$, where $\rho_0$ is an absolutely irreducible representation of the Weil group $W_F$ with trivial monodromy. Then $N^{r-1}\Lambda$ is a nontrivial invariant sub-lattice with trivial monodromy, making $\rho_0$ an integral Weil-Deligne representation. So we have $\bar\rho^{ss}=\oplus_{i=0}^{r-1}\tens{\bar{\rho}_0^{ss}}{}{\chi_{cycl}^i}\ (\mathrm{mod}\ \ell)$ . The rest of the theorem follows from the lemma below. 
\end{proof}
\begin{lem}
Let $\rho_0$ be an irreducible representation of the Weil group $W_F$ of dimension $d>1$ such that $\bar\rho_0^{ss}$ is unramified. If $p\not=\ell$, $(p,d)=1$, and $(\ell,d)=1$, then $\rho_0=\ind {W_L}{W_F}{\chi}$ (up to unramified twist) for the unramified extension $L/F$ of degree $d$, and character $\chi$ of $W_L$ whose modulo $\ell$ reduction  is trivial. Moreover, this could only happen when $\ell|q^d-1$ and $d$ is the smallest integer satisfying  this condition.

In particular, we see that $$\bar{\rho}_0=\oplus_{i=0}^{d-1}\chi_{cycl}^i$$ (up to unramified twists).
\end{lem}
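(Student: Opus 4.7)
The plan is to exploit the hypothesis that $\bar\rho_0^{ss}$ is unramified to force $\rho_0$ to be tamely ramified and monomial in a rigid way.

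First I would show that $\rho_0(I_F)$ is a finite cyclic $\ell$-group. Fix a $W_F$-stable $\mathcal{O}_M$-lattice so that $\rho_0(I_F) \subset GL_d(\mathcal{O}_M)$. The principal congruence subgroup $1+\mathfrak{m} M_d(\mathcal{O}_M)$ is pro-$\ell$, while the assumption that $\bar\rho_0^{ss}$ is unramified forces the image of $\rho_0(I_F)$ in $GL_d(\flb)$ to be unipotent, hence a finite $\ell$-group. Combining these, $\rho_0(I_F)$ is itself a finite $\ell$-group. Since wild inertia $P_F$ is pro-$p$ with $p \neq \ell$, it acts trivially, so $\rho_0$ factors through the tame quotient; and because tame inertia is a product of pro-$\ell'$ groups for $\ell' \neq p$, any finite $\ell$-quotient of it is cyclic.

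Next, by Clifford theory the restriction $\rho_0|_{I_F}$ decomposes as a sum of characters of the cyclic group $\rho_0(I_F)$, with Frobenius permuting them by $\chi \mapsto \chi^q$. Irreducibility of $\rho_0$ forces transitivity, so
\[\rho_0|_{I_F} \cong \bigoplus_{i=0}^{d-1} \chi^{q^i}\]
for a character $\chi$ whose $q$-orbit has size exactly $d$. The stabilizer of $\chi$ in $W_F/I_F = \langle\mathrm{Frob}\rangle$ is $\langle\mathrm{Frob}^d\rangle$, which corresponds to $W_L$ for the unramified extension $L/F$ of degree $d$, so $\rho_0 = \ind{W_L}{W_F}{\tilde\chi}$ for an extension $\tilde\chi$ of $\chi$ to $W_L$.

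Then I would extract the numerical condition. If $\chi$ has order $\ell^k$, then $d$ equals the multiplicative order of $q$ in $(\zset/\ell^k\zset)^\times$. This order and the order of $q$ in $(\zset/\ell\zset)^\times$ differ only by a power of $\ell$; since $(d,\ell)=1$ by hypothesis, they coincide, so $d$ is the smallest positive integer with $\ell \mid q^d-1$. Finally, because $\tilde\chi|_{I_L}$ has $\ell$-power order, $\bar{\tilde\chi}$ is unramified, and twisting by a suitable unramified character of $W_F$ (after extracting a $d$-th root in $\flb^{\times}$) we may assume $\bar{\tilde\chi} = 1$. Then $\bar\rho_0 = \ind{W_L}{W_F}{1}$ is unramified and its Frobenius eigenvalues are the $d$-th roots of unity in $\flb$, namely $\{1,\bar q,\ldots,\bar q^{d-1}\}$ since $\bar q$ is primitive of order $d$; matching with $\chi_{cycl}(\mathrm{Frob}) = q$ gives $\bar\rho_0 \cong \bigoplus_{i=0}^{d-1} \chi_{cycl}^i$.

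The main obstacle is the first step: one has to squeeze from both sides, using that the $\ell$-adic congruence subgroups are pro-$\ell$ and that the mod $\ell$ semisimplification being trivial forces the mod $\ell$ image to be unipotent, in order to upgrade the hypothesis on $\bar\rho_0^{ss}$ to a structural constraint on $\rho_0(I_F)$ itself. Once this is in hand, Clifford theory and elementary order arithmetic deliver the rest.
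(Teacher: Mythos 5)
Your proof is correct, but it takes a genuinely different route from the paper's. The paper first invokes Tate's theorem that an irreducible representation of $W_F$ of dimension prime to $p$ is monomial, writes $\rho_0=\ind{W_L}{W_F}{\chi}$ for an a priori arbitrary degree-$d$ extension $L/F$, and then uses local class field theory (norm maps on residue fields and the non-extendability of $\chi$ to intermediate fields) to force $L/F$ unramified and $d$ minimal with $\ell\mid q^d-1$. You instead extract the monomial structure directly from the hypothesis on $\bar\rho_0^{ss}$: the mod-$\ell$ unramifiedness forces $\rho_0(I_F)$ to be a finite cyclic $\ell$-group, hence the representation is tame, and Clifford theory for the normal subgroup $I_F\trianglelefteq W_F$ produces the induction from the degree-$d$ \emph{unramified} extension together with the elementary order arithmetic ($d=\operatorname{ord}(q \bmod \ell^k)$, which equals $\operatorname{ord}(q\bmod \ell)$ since $(d,\ell)=1$). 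This buys something: you never need Tate's monomiality theorem, and in fact you never use the hypothesis $(p,d)=1$, so your argument is slightly more general, and it makes the structural constraint (cyclic $\ell$-power inertia image) explicit; the paper's argument instead works on $W_L^{ab}$ and gets the same conclusions from extendability criteria for $\chi$. Two small points you should spell out: (i) the pro-$\ell$/unipotent squeeze by itself only shows $\rho_0(I_F)$ is pro-$\ell$, not finite; finiteness is automatic here because $\rho_0$ is the Weil part of a Weil--Deligne representation (smooth on inertia), or can be deduced from Grothendieck's monodromy theorem plus irreducibility, but it should be said; (ii) in the Clifford step, transitivity of the Frobenius orbit alone gives $d=em$ with $m$ the common multiplicity, so to get orbit size exactly $d$ you need $m=1$; this follows because the stabilizer $W_L$ acts on the $\chi$-isotypic component through a group generated by scalars (inertia) and a single Frobenius element, hence abelian, so that constituent is a character. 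Both are routine and do not affect the validity of your approach.
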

\begin{proof}
By \cite[Cor 2.2.5.3]{tate1979number}, $\rho_0$ is monomial. So, we may assume that $\rho_0=\ind {W_L}{W_F}\chi$ with $\chi:W_L\to \ok M^{\times}$ a smooth character for some finite extension $M$ of $\ql$. Then, $\chi$ could not extend to $W_{L'}$ whenever $F\subset L'\subsetneq L$. Indeed, by Frobenius reciprocity, if $\chi$ has an extension $\chi':W_{L'}\to \ok M^{\times}$, then there exists a non-trivial map $\chi' \to\ind {W_L}{W_{L'}}{\chi}$. So, we have a non-trivial map $\ind {W_{L'}}{W_F}{\chi'}\to\ind {W_L}{W_F}{\chi}$, contradicting irreducibility.

Now we focus on $\chi$. Note that $\chi$ factors through an abelian quotient. By class field theory, we have that
$$W_L^{ab}\cong \text{pro-$p$ part}\times k_L^{\times}\times \mathrm{Frob}^{\zset}\xrightarrow{\chi} \ok M^{\times}\cong \zset_{\ell}^{u}\times \mu_{\ell^v}\times k_{M}^{\times}$$
where $k_L,k_M$ denotes the corresponding residue fields. 

We will first prove that $\chi$ must be just of the form $\chi:k_L^{\times}\to \mu_{\ell^v}$ (up to unramified twist). In particular, $\bar\chi=1$ (i.e. the projection to $k_M^{\times}$ is trivial).

(1) After possibly enlarging $M$, any unramified characters of $W_L$ could extend to $W_F$. As $\ind{H}G{\tens {\rho}{}{\chi'_{|H}}}=\tens{\ind HG{\rho}}{}{\chi'}$ for $\chi'$ a character of $G$, we may assume $\mathrm{Frob}$ maps to the trivial element. 

(2) Since $p\not=\ell$ and we consider the discrete topology on the right-hand side, the pro-$p$ part only maps non-trivially to $k_M^{\times}$. Besides, the image of $k_L^{\times}$ does not contribute to the torsion-free part of the right-hand side. 

(3) So, we are now left with the map $$\chi:\text{pro-$p$ part}\times k_L^{\times}\to\mu_{\ell^v}\times k_M^{\times}.$$ We claim that $\mathrm{Im}(\bar\chi)$ is $\ell$-torsion, so $\bar\chi=1$ and the pro-$p$ part can't contribute. 

Let us prove the claim. Since we are considering $\mathrm{mod}\ \ell$ coefficients, for $a\in\text{pro-p part}$ or $a\in k_L^{\times}$ such that $(\operatorname{ord}(a),\ell)=1$, $\bar\rho_0(a)$ is semisimple. So, $\bar\rho_0^{ss}(a)=\bar\rho_0(a)$. Since $\bar\rho_0^{ss}$ is unramified, $\bar\rho_0(a)=1$. Then, by the property of induced representation, we see that $\bar\chi(a)=1$. So, $\mathrm{Im}(\bar\chi)$ is $\ell$-torsion.

Now we prove that $L/F$ is unramified and analyze $d$ to deduce the final conclusion,

Recall for $F\subset L'\subset L$, we have the commutative diagram

$$
\begin{tikzcd}
W_{L}^{ab} \arrow[r, two heads] \arrow[d] & k_{L}^{\times} \arrow[d, "\mathrm{Nm}"'] \\
W_{L'}^{ab} \arrow[r, two heads]                    & k_{L'}^{\times}.                    
\end{tikzcd}
$$
Since we allow enlarging $M$, the existence of extension is equivalent to $\mathrm{Ker} (\mathrm{Nm})\subset \mathrm{Ker} (\chi)$.

(4) We claim that $L/F$ is unramified, which implies $|k_L^{\times}|=q^d-1$. Otherwise, we can find $L'$ such that $L/L'$ is totally ramified. Then $k_L=k_{L'}$ and $\mathrm{Nm}:x\to x^a$ for some $a|d$. In particular, we have $(a,\ell)=1$. Since $\mathrm{Im}(\chi)$ is $\ell$ torsion, $Ker(\mathrm{Nm)}\subset Ker(\chi)$, so $\chi$ could extend.

(5) Now we prove that $d$ is the minimal integer such that $\ell||k_L^{\times}|=q^d-1$. Indeed, since $d\not=1$, $\chi$ is nontrivial, so $\ell|q^d-1$. Choose the smallest $c$ such that $\ell|q^c-1$. If $c<d$, then $c|d$. So, we can choose $F\subset L'\subset L$ such that $[L:L']=h, [L':F]=c$. Since $L/L'$ is unramified, then
\begin{equation*}
    \begin{aligned}
        \textnormal{Nm:}\quad &k_L^\times\longrightarrow k_{L'}^\times\\
        &x\ \ \ \mapsto \ \  x^{1+q^c+q^{2c}+\cdots +q^{(h-1)c}}.
    \end{aligned}
\end{equation*}
As $\ell|q^c-1$, $1+q^c+q^{2c}+\dots+q^{(h-1)c}\equiv h(\mathrm{mod}\ \ell)$. Since $(h,\ell)=1$, $\chi(\mathrm{Ker}(\mathrm{Nm}))=1$. So, $\chi$ could extend and we arrive at a contradiction.

So, $q$ is the generator of the $d$-cyclic part of $k_M^{\times}$, which implies that $\bar{\rho}_0=\ind{W_L}{W_F} 1=\oplus_{i=0}^{d-1}\chi_{cycl}^i$.
\end{proof}

\section{Local Vanishing Result}\label{sec3}
With a similar strategy, we will now generalize \cite[Thm 1.1]{koshikawa2021generic} to certain non-generic cases, i.e., we prove that after localizing at certain non-generic parameters, the cohomology of the local Shimura variety vanishes depending on the form of $J_b$. 

We work in a restricted setting of \cite[Thm 1.1/1.2]{koshikawa2021generic}. Namely, we consider a local Shimura datum $(G,b,\mu)$ such that $G_{\qp}\cong \prod_{i=1}^d \gl {n+1}{\qp}\times\gm(\qp)$. We choose $\mu$ so that its restriction to the first factor of the $\prod_{i=1}^d \gl {n+1}{\qp}$ is of the form $\mu': t\to (t,1,\dots,1,1)$, trivial otherwise and is the identity on the similitude factor. $b$ is an element in $B(G,\mu^{-1})$. Choose $K_p$ to be the hyperspecial open compact subgroup. Let $\mathcal{M}_{(G,b,\mu),K_p}$ denote the corresponding local Shimura variety. Then the Hecke algebra $H_{K_p}=\zl[K_p\backslash G(\qp)/K_p]$ acts on it via correspondence. Besides, let $J_b$ be the algebraic group associated with $b$. We have $(\mathrm{Aut}_G(\widetilde{X_b})_{\eta}^{ad}=)\widetilde{J_b}=J_b^0\rtimes J_b(\qp)$ acting on $\mathcal{M}_{(G,b,\mu),K_p}$ as well. In particular, $J_b^0$ is connected and has dimension $d_b=\langle2\rho,v_b\rangle$. Note that $d_b$ will also be the dimension of the corresponding Igusa variety in the global setting.  
%%?

By projecting onto the first factor, we have that $B(G,\mu^{-1})=B(\mathrm{GL}_{n+1},\mu'^{-1})$. Recall that elements of $B(\mathrm{GL}_{n+1},\mu'^{-1})$ correspond to $p$-divisible groups over $\fpb$ of height $n+1$ and dimension $1$, which are $\{G_{\frac{1}{r}}\times({\qp/\zp})^{n+1-r}\}_{1\leq r\leq n+1}$. (We use $G_{\frac{r}{s}}$ to denote the unique irreducible $p$ divisible group of dimension $r$ and height $s$.)  We denote the corresponding elements by $b_r$. Then, we have $b_1>b_2>b_3>\dots>b_{n+1}$ and $d_{b_r}=n-r+1$.

Let $\mathfrak m\subset H_{K_p}$ be the maximal ideal corresponding to a character $\lambda_{\mathfrak m}:H_{K_p}\to\flb$. Then we have the associated unramified $L$-parameter,

$$\rho_{\mathfrak m}:\mathrm{Frob}_{\qp}^{\zset}\to \prod_{i=1}^d \gl {n+1}{\flb}\times\gm(\flb).$$

\begin{rmk}
For the local result, there is no essential difference between whether we consider $\qp$ or one of its finite extension. We work with $\qp$ for ease of notation.
\end{rmk}

Because of the choice of $\mu$, we will only consider the first factor of $\rho_{\mathfrak{m}}$ and $J_b$. 

\begin{rmk}\label{first}
For similar reason, one can also treat the more general local situation where $G_{\qp}=\textnormal{Res}_{K/\qp}\mathrm{GL}_{n+1}\times H$, with $K/\qp$ unramified, $H$ unramified over $\qp$,  and $\mu$ nontrivial only on the factor $\textnormal{Res}_{K/\qp}\gl {n+1} {K}$.  With a more careful analysis of $B(G,\mu^{-1})$ and addition conditions on $\ell$ coming from the input of Fargues--Scholze, one should get similar vanishing results relating to eigenvalues of $\mathrm{Frob}_{K}$. 
\end{rmk}

Recall that for any algebraically closed field $M$ over $\overline{\zset}_{\ell}$, \cite{fargues2021geometrization} constructs a semisimple local Langlands correspondence, which to any irreducible smooth $M$ representation of $J_b(\qp)$ associates a semisimple $L$-parameter:

$$\varphi_{\pi}:W_{\qp}\xrightarrow{\varphi_{J_b}^1(\pi)} \hat{J_b}(M)\rtimes W_{\qp}\xrightarrow{twist} \hat{G}(M)\rtimes W_{\qp}$$ well-defined up to $\hat{J_b}(M)$-conjugacy, satisfying certain conditions. This construction is compatible with parabolic induction and mod $\ell$ reduction. Moreover, for basic $b$ it is the same as the usual semisimplified parameter given by Jacquet--Langlands by \cite{hansen2022kottwitz}.
This construction depends on a choice of $p^{\frac 12}\in\overline{\zl}$, which we fix from now on.

\begin{rmk}\label{twist}
The $L$-parameter constructed above is built by considering the action of spectral Bernstein center through $j^b:\mathrm{Bun}_G^b\to \mathrm{Bun}_G$. We call it $\varphi_{G}^b(\pi)(=\varphi_{\pi})$. If we consider it purely as $J_b$ representation, and work on $\mathrm{Bun}_{J_b}^{1}\to \mathrm{Bun}_{J_b}$, then by \cite[IX.7.1]{fargues2021geometrization}, the parameter we get will be different from the above $\varphi_{G}^b(\pi)$ by a twist of unramified characters ($\rho_{\hat G}-\rho_{\hat{J_b}}$), which we denote by $\varphi_{J_b}^1(\pi)$. In particular, for $b_r$, we have that $J_{b_r}(\qp)=D^{\times}_{\frac {1}r}\times \gl{n+1-r}{\qp}$, where $D_{\frac 1 r}$ is the division algebra over $\qp$ with invariant $\frac 1r$. If we decompose $\pi$ as $\pi=(\pi_1\boxtimes\pi_2)$, then  by writing $\varphi_{J_{b_r}}^1(\pi)$ as $(\varphi_1,\varphi_2)$ with $\varphi_1=\varphi_{D^{\times}_{\frac {1}r}}^1(\pi_1)$ and $\varphi_2=\varphi_{\gl{n+1-r}{\qp}}^1(\pi_2)$,  $\varphi_{G}^b(\pi)$ will be $(\varphi_1 \chi_{cycl}^{-\frac{n-r+1}2},\varphi_2\chi_{cycl}^{\frac r2})$. Additionally, when $b$ is basic, there is no difference between the two parameters.
\end{rmk}

Then, Koshikawa proves the following essential result, matching the action of the spectral Bernstein center and the Hecke action.
%%改一下行文
\begin{thm}\cite[Section 3]{koshikawa2021generic}
If an irreducible smooth $J_b(\qp)$ representation $\pi$ appears as a subquotient of $\ccoho k{\mathcal{M}_{(G,b,\mu),K_p}}{\flb}_{\mathfrak{m}}$, then $$\rho_{\mathfrak m}=\varphi_{\pi}.$$
\end{thm}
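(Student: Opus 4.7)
The plan is to derive the equality $\rho_{\mathfrak m}=\varphi_\pi$ from Fargues--Scholze's spectral action, after giving a sheaf-theoretic interpretation of $\ccoho{*}{\mathcal{M}_{(G,b,\mu),K_p}}{\flb}$ on $\mathrm{Bun}_G$. First I would realize the compactly supported cohomology as (the fiber at the basic point of) the Hecke operator $T_\mu$ applied to $j^b_!$ of the relevant level-$K_p$ sheaf on $\mathrm{Bun}_G^b$; equivalently via the Igusa/modification description. The crucial outcome is that this cohomology is a complex of smooth $G(\qp)\times J_b(\qp)$-modules whose $H_{K_p}$-action is induced from the geometric Hecke action on $\mathrm{Bun}_G$ rather than (only) the classical one via correspondences on $\mathcal{M}_{(G,b,\mu),K_p}$.

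Next I would apply the spectral action of \cite{fargues2021geometrization}: the Hecke action on sheaves over $\mathrm{Bun}_G$ factors through the spectral Bernstein center, and on any irreducible smooth $J_b(\qp)$-subquotient $\pi$, the induced action is given by evaluation at the Fargues--Scholze parameter $\varphi_\pi$, obtained from $\varphi_{J_b}^1(\pi)$ and the twist recorded in \cref{twist}. Combined with the well-known compatibility of the classical unramified Hecke operator $T_\mu$ at the hyperspecial level with the image of the corresponding excursion operator under the Satake/unramified-Langlands dictionary, one deduces that $H_{K_p}$ acts on $\pi$ (viewed as a subquotient of the cohomology) through the unramified character associated with $\varphi_\pi$.

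Finally, localization at $\mathfrak m$ forces $H_{K_p}$ to act through $\lambda_{\mathfrak m}$, which by definition corresponds under the unramified parametrization to $\rho_{\mathfrak m}$. Bijectivity of the Satake parametrization for unramified semisimple $L$-parameters with values in $\flb$ then yields $\rho_{\mathfrak m}=\varphi_\pi$. The main obstacle — and the substance of the argument in \cite[Section 3]{koshikawa2021generic} — is the middle step: identifying the a priori geometric (correspondence-theoretic) $H_{K_p}$-action on $\ccoho{*}{\mathcal{M}_{(G,b,\mu),K_p}}{\flb}$ with the spectral Bernstein center action produced by the Fargues--Scholze formalism on $\mathrm{Bun}_G$, and correctly handling the non-basic twist by $(\rho_{\hat G}-\rho_{\hat{J_b}})$ highlighted in \cref{twist}. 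Once that compatibility is in place, the mere existence of a subquotient $\pi$ at $\mathfrak m$ forces $\rho_{\mathfrak m}$ and $\varphi_\pi$ to agree.
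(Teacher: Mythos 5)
The paper gives no proof of this statement---it is quoted directly from \cite[Section 3]{koshikawa2021generic}---and your outline follows essentially the same strategy as Koshikawa's argument there: realize $R\Gamma_c$ of the local Shimura variety via the Hecke operator $T_\mu$ applied to $j^b_!$ on $\mathrm{Bun}_G$, invoke the compatibility of the Fargues--Scholze spectral (Bernstein center) action with the classical unramified Hecke action at hyperspecial level through the Satake dictionary (including the twist of \cref{twist}), and conclude by localizing at $\mathfrak m$. Be aware that what you call the ``main obstacle'' is precisely the content of the cited result, so your text is a correct roadmap rather than a self-contained proof, which matches how the paper itself treats the statement (as a citation).
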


%\begin{rmk}
%One of the ingredients of the proof is $$R\Gamma_{c}(\mathcal{M}_{(G,b,\mu),K_p},\overline{\zset}_{\ell}[p^{1/2}][d](d/2))\cong j^{b*}T_{\mu^{\vee}}(j^{1!}(\mathrm{c\text{-}Ind}_K^{G(\qp)}\overline{\zset}_{\ell}[p^{1/2}])).$$
%\end{rmk}

\subsection{Proof of Local Vanishing}
We will always assume $p,\ell>n+1$ in this section.
\begin{lem}
If $\pi$ is an irreducible smooth $\flb$ representation of $J_{b_i}(\qp)$, and $\varphi_{\pi}$ is unramified, then there exists $\alpha$ such that the multiset $\{\alpha, p\alpha, p^2\alpha,p^3\alpha,\dots, p^{i-1}\alpha\}$ is contained in the multiset of eigenvalues of $\varphi_{\pi}(\mathrm{Frob})$. (We will call this property $P_i$ for convenience.)
\end{lem}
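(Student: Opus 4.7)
The strategy is to decompose $\pi$ along the splitting $J_{b_i}(\qp) = D^*_{\frac{1}{i}} \times \gl{n+1-i}{\qp}$ and to produce the geometric progression entirely from the division-algebra factor. Write $\pi = \pi_1 \boxtimes \pi_2$. By \Cref{twist}, the parameter decomposes as
$\varphi_\pi = (\varphi_1 \chi_{cycl}^{-(n-i+1)/2},\ \varphi_2 \chi_{cycl}^{i/2})$
with $\varphi_1 = \varphi^1_{D^*_{\frac{1}{i}}}(\pi_1)$, and unramifiedness of $\varphi_\pi$ forces $\varphi_1$ to be unramified. Since the eigenvalue multiset of $\varphi_\pi(\mathrm{Frob})$ is the union of those coming from each factor, it suffices to exhibit the desired progression inside the $i$ eigenvalues contributed by $\varphi_1 \chi_{cycl}^{-(n-i+1)/2}(\mathrm{Frob})$.

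To identify $\varphi_1$, I pass to characteristic zero: lift $\pi_1$ to an integral smooth representation of $D^*_{\frac{1}{i}}$ in characteristic zero (using that irreducible smooth representations of the compact-mod-center group $D^*_{\frac{1}{i}}$ are finite-dimensional, and that $\ell > n+1 \geq i$ makes $\ell$ banal enough for integral lifts). Its Jacquet--Langlands transfer to $\gl{i}{\qp}$ is a discrete series $\mathrm{St}_r(\sigma)$ for some supercuspidal $\sigma$ on $\gl{d}{\qp}$ with $dr = i$, whose Langlands parameter is the indecomposable, Frobenius-semisimple, integral Weil--Deligne representation $\rho_\sigma \otimes \mathrm{Sp}(r)$ of dimension $i$. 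Compatibility of Fargues--Scholze with classical local Langlands on inner forms of $\gl{i}{\qp}$ and with reduction modulo $\ell$ identifies $\varphi_1$ with the semisimplified mod-$\ell$ reduction of this parameter. Since $p, \ell > n+1 \geq i$ ensures $(\ell, i) = (p, i) = 1$, \Cref{rep} applies and gives
\[
\varphi_1^{ss} \cong \beta \cdot \bigoplus_{j=0}^{r-1}\bigoplus_{k=0}^{d-1} \chi_{cycl}^{j+k}
\]
for some unramified character $\beta$, where $d$ is either $1$ or the minimal positive integer with $\ell \mid p^d - 1$.

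The final step is a short multiset count. Set $\alpha = \beta(\mathrm{Frob}) \cdot p^{-(n-i+1)/2}$, absorbing the twist of \Cref{twist}. Then the eigenvalues of $\varphi_1 \chi_{cycl}^{-(n-i+1)/2}(\mathrm{Frob})$ form the multiset $\{\alpha \cdot p^{j+k} : 0 \leq j \leq r-1,\ 0 \leq k \leq d-1\}$ modulo $\ell$. The relation $p^d \equiv 1 \pmod{\ell}$ makes this depend only on $(j+k) \bmod d$, and a direct count shows each residue class modulo $d$ is hit by exactly $r$ pairs $(j,k)$. The target multiset $\{\alpha, p\alpha, p^2\alpha, \ldots, p^{i-1}\alpha\} = \{\alpha p^m : 0 \leq m \leq dr-1\}$ likewise distributes uniformly with multiplicity $r$ across residues modulo $d$. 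The two multisets therefore coincide, which proves property $P_i$.

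The main delicate point is the invocation of Fargues--Scholze compatibility with Jacquet--Langlands on $D^*_{\frac{1}{i}}$ at the level of mod-$\ell$ semisimplified parameters, with the correct normalization against \Cref{twist}; once this is accepted, the argument reduces to a direct application of \Cref{rep} followed by the combinatorial identification above. The extreme cases $d = 1$ (where the progression appears literally from $\sigma \otimes \mathrm{Sp}(i)$) and $r = 1$ (where it appears from the mod-$\ell$ reduction of a single supercuspidal $\rho_\sigma$ via the second lemma of Section \ref{sec2}) are instructive sanity checks that both reduce to the general count.
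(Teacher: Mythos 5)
Your proposal is correct and follows essentially the same route as the paper: decompose $\pi=\pi_1\boxtimes\pi_2$ along $J_{b_i}(\qp)=D^*_{\frac1i}\times\gl{n+1-i}{\qp}$, note $\pi_1$ is supercuspidal, lift it to characteristic zero, and combine Jacquet--Langlands with the Fargues--Scholze compatibilities (inner forms and mod-$\ell$ reduction) so that \Cref{rep} identifies $\varphi_{\pi_1}$ with an unramified twist of $\oplus_{t=0}^{i-1}\chi_{cycl}^t$. The only difference is that your final multiset count re-proves the ``Hence'' clause of \Cref{rep}, which could simply be quoted.
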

\begin{rmk}
The property $P_i$ is preserved under duality.
\end{rmk}
\begin{proof}
As in \cref{twist}, $J_{b_i}(\qp)=D_{\frac {1}i}^{\times}\times\gl {n+1-i}{\qp}$. So, $\pi$ is of the form $(\pi_1\boxtimes\pi_2)$ for irreducible representations $\pi_1,\pi_2$ of $D_{\frac {1}i}^{\times}$ and $\gl {n+1-i}{\qp}$ respectively. 
Hence, $\varphi_{\pi}$ will be of the form $(\varphi_{\pi_1}\chi_{cycl}^{-\frac{n-i+1}2},\varphi_{\pi_2}\chi_{cycl}^{\frac i2})$. Since $D^{\times}$ is compact modulo the center, $\pi_1$ is supercuspidal. So, by \cite[3.27,3.28]{minguez2014types}, we can always find a lift of  $\pi_1$ to a supercuspidal irreducible characteristic 0 representation $\tilde{\pi}$. Then, by the Jacquet-Langlands correspondence, its corresponding $L$-parameter $\phi$ is indecomposable. Now, via the compatibility of Fargues--Scholze parameter with the usual Jacquet--Langlands correspondence, its semisimplification $\phi^{ss}$ is given by $\varphi_{\tilde{\pi}}$. Then since Fargues--Scholze parameter is compatible with mod $\ell$ reduction, we see that $\overline{\phi^{ss}}$ equals $\varphi_{\pi_1}$. Then by \cref{rep}, $\varphi_{\pi_1}=\oplus_{t=0}^{i-1}\chi_{cycl}^t$.
\end{proof}

Let $r_{\mu}$ be the highest weight representation corresponding to $\mu$, which is the standard representation of the first factor. Then we can prove the following result:

\begin{cor}
If $\ccoho k{\mathcal{M}_{(G,b_i,\mu),K_p}}{\flb}_{\mathfrak{m}}\not=0$ for some $k$, then there exists some $\alpha$ such that the multiset $\{\alpha, p\alpha, p^2\alpha,p^3\alpha,\dots, p^{i-1}\alpha\}$ is contained in the multiset of eigenvalues of $r_{\mu}\circ\rho_{\mathfrak{m}}(\mathrm{Frob})$.
\end{cor}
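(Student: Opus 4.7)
The plan is to combine Koshikawa's matching theorem (cited just above) with the preceding lemma on property $P_i$, paying attention to the twist conventions recorded in \Cref{twist}.

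First, if $\ccoho k{\mathcal{M}_{(G,b_i,\mu),K_p}}{\flb}_{\mathfrak{m}}\not=0$ for some $k$, then since this cohomology carries a smooth (and admissible) action of $J_{b_i}(\qp)$, I would extract an irreducible smooth $\flb$-representation $\pi$ of $J_{b_i}(\qp)$ appearing as a subquotient. Koshikawa's theorem then forces $\rho_{\mathfrak{m}}=\varphi_\pi$. Since $\rho_{\mathfrak{m}}$ is unramified by hypothesis, so is $\varphi_\pi$, putting us in the setting of the previous lemma.

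Applying that lemma yields some $\alpha$ such that $\{\alpha, p\alpha, \dots, p^{i-1}\alpha\}$ is contained in the multiset of eigenvalues of $\varphi_\pi(\mathrm{Frob})$. To convert this into the desired statement about $r_\mu\circ\rho_{\mathfrak{m}}$, I would trace through the proof of the lemma to pin down where these eigenvalues actually live: decomposing $\pi=\pi_1\boxtimes\pi_2$ under $J_{b_i}(\qp)=D^*_{1/i}\times\gl{n+1-i}{\qp}$ and using \Cref{twist}, the eigenvalues in question are precisely those of Frobenius acting on the first component $\varphi_{\pi_1}\chi_{cycl}^{-(n-i+1)/2}$ of $\varphi_\pi$ (with $\varphi_{\pi_1}=\bigoplus_{t=0}^{i-1}\chi_{cycl}^t$, so one may take $\alpha=p^{-(n-i+1)/2}$ up to an unramified scalar). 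Since $r_\mu$ is exactly the standard representation of the first $\gl{n+1}{\qp}$-factor of $\hat G$ and $\rho_{\mathfrak{m}}=\varphi_\pi$, these eigenvalues form a sub-multiset of the eigenvalues of $r_\mu\circ\rho_{\mathfrak{m}}(\mathrm{Frob})$, delivering the corollary.

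The main obstacle I expect is purely bookkeeping around the twist. One has to verify that the unramified twist coming from $\rho_{\hat G}-\rho_{\hat{J_b}}$ in \Cref{twist} is indeed a scalar character, so that it preserves the arithmetic-progression shape $\{\alpha, p\alpha, \dots, p^{i-1}\alpha\}$ (only shifting the value of $\alpha$), and that the identification of eigenvalues read off in the lemma matches those read off via $r_\mu$ on the $\hat G$-side rather than on the $\hat{J_b}$-side. Once this compatibility is checked, the proof is essentially a direct concatenation of the two cited inputs with no further content.
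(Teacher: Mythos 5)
Your proposal is correct and matches the paper's (implicit) argument: the corollary is exactly the concatenation of Koshikawa's theorem with the preceding lemma, extracting an irreducible subquotient $\pi$, identifying $\rho_{\mathfrak m}=\varphi_\pi$, and noting via \Cref{twist} that the half-integral cyclotomic twist is a scalar unramified character, so it only shifts $\alpha$ and preserves the progression inside the eigenvalues of $r_\mu\circ\rho_{\mathfrak m}(\mathrm{Frob})$. Nothing further is needed.
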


Moreover, by a similar process but allowing $(1,n)$ to appear $s$ times and choosing $\mu$ accordingly, we have that $B(G,\mu^{-1})=\{{b_I=b_{i_1}\times b_{i_2}}\times\dots\times b_{i_s}| 1\leq i_k\leq {n+1}\ for\ 1\leq k\leq s \}$. Then, $J_{b_I}=\prod J_{b_{i_k}}$. Let $|I|=\sum_{j=1}^s i_j-s=\dim \flg^{b_I} $, then 
\begin{thm}\label{ssig}
If $\ccoho k{\mathcal{M}_{(G,b_I,\mu),K_p}}{\flb}_{\mathfrak{m}}\not=0$ for some $k$, then there exists some $\alpha$ such that multiset $\{\alpha, p\alpha, p^2\alpha,p^3\alpha,\dots, p^{|I|}\alpha\}$ is contained in the multiset of eigenvalues of $r_{\mu}\circ\rho_{\mathfrak{m}}(\mathrm{Frob})$.
\end{thm}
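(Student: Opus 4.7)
The plan is to reduce \cref{ssig} to the single-stratum corollary via the product decomposition $J_{b_I}=\prod_{j=1}^s J_{b_{i_j}}$ and the tensor-product structure of $r_\mu$. First, the matching theorem of Koshikawa cited above produces an irreducible smooth $J_{b_I}(\qp)$-representation $\pi$ appearing as a subquotient of $\ccoho k{\mathcal{M}_{(G,b_I,\mu),K_p}}{\flb}_{\mathfrak{m}}$ with $\varphi_\pi=\rho_{\mathfrak m}$; in particular $\varphi_\pi$ is unramified. Because $J_{b_I}(\qp)$ factors as $\prod_{j=1}^s J_{b_{i_j}}(\qp)$, the irreducible $\pi$ splits as an exterior tensor product $\pi_1\boxtimes\cdots\boxtimes\pi_s$ with each $\pi_j$ an irreducible smooth representation of $J_{b_{i_j}}(\qp)$, and the Fargues--Scholze construction is compatible with this product, so $\varphi_\pi$ decomposes into components $\varphi_{\pi_j}$ living in the $j$-th $\gl{n+1}{\flb}$ factor of $\hat G$.

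Next, applying property $P_{i_j}$ from the preceding lemma to each $\pi_j$, one obtains $\alpha_j\in\flb^{\times}$ such that $\{\alpha_j,p\alpha_j,\dots,p^{i_j-1}\alpha_j\}$ is contained in the multiset of Frobenius eigenvalues of $\varphi_{\pi_j}$; the cyclotomic twist from \cref{twist} is absorbed into the choice of $\alpha_j$, exactly as in the single-factor corollary. By the choice of $\mu$ (the minuscule cocharacter $t\mapsto(t,1,\dots,1)$ on each of the $s$ relevant copies of $\gl{n+1}{\qp}$), the representation $r_\mu$ of $\hat G$ is the exterior tensor product of the standard representations of the $s$ relevant $\gl{n+1}{\flb}$ factors, and hence the multiset of eigenvalues of $r_\mu\circ\rho_{\mathfrak m}(\mathrm{Frob})$ is the product (with multiplicity) of the factor-wise eigenvalue multisets. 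Setting $\alpha=\prod_{j=1}^s\alpha_j$, for each $m\in\{0,1,\dots,|I|\}$ with $|I|=\sum_j(i_j-1)$ one can choose integers $(e_1,\dots,e_s)$ with $0\le e_j\le i_j-1$ and $\sum_j e_j=m$; the resulting product $\prod_j(\alpha_j p^{e_j})=\alpha p^m$ appears as an eigenvalue of $r_\mu\circ\rho_{\mathfrak m}(\mathrm{Frob})$. Since distinct values of $m$ arise from disjoint tuples, the $|I|+1$ elements $\alpha,p\alpha,\dots,p^{|I|}\alpha$ form a sub-multiset of the eigenvalues, as required.

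Apart from this combinatorial assembly, the only items needing care are the tensor factorization of $r_\mu$ (immediate from the description of $\mu$ as a product of minuscule cocharacters) and the compatibility of the Fargues--Scholze parameter with the product decomposition $J_{b_I}=\prod_j J_{b_{i_j}}$ (analogous to the two-factor statement in \cref{twist}). Both amount to bookkeeping once unwound, so the real content is just the single-stratum lemma, re-used independently on each factor and then glued by the tensor structure of $r_\mu$.
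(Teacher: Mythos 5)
Your proposal is correct and follows essentially the same route as the paper: decompose $\pi$ and $\rho_{\mathfrak m}$ along $J_{b_I}=\prod_j J_{b_{i_j}}$, apply the single-stratum lemma (property $P_{i_j}$) to each factor, and use $r_\mu=\otimes_j r_{\mu_j}$ to assemble the progressions into $P_{|I|+1}$ via $\alpha=\prod_j\alpha_j$ and exponent tuples $(e_1,\dots,e_s)$ with $\sum_j e_j=m$. Your write-up simply fills in the combinatorial and multiset bookkeeping that the paper leaves implicit, so no further changes are needed.
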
 

\begin{proof}
If $\mu=\prod_{i=1}^s\mu_i$, then $r_{\mu}=\otimes r_{\mu_i}$. By a similar strategy as above, we see that the $t$-th factor of $\rho_{\mathfrak m}$ satisfies $P_{i_t}$. Hence, $r_{\mu}\circ \rho_{\mathfrak m}$ satisfies $P_{|I|+1}$.
\end{proof}

\section{Mantovan's formula and Conclusion}\label{sec4}

Now we fix the setting to an integral Shimura datum $(G,X)$ of PEL type A given by $(\mathcal{O}_B,*,\Lambda,(·,·))$, where $B$ is a central simple $F$ algebra for a CM field $F$, and $G$ is a unitary
similitude group attached to this data. Then $G(\rset)\cong G(\prod_{\tau:F\to \cset} U(p_{\tau},q_{\tau}))$. We require that the signatures are all of the form $(1,n)$ ($s$ times) or $(0,n+1)$ and $\mu$ to be the canonical cocharacter accordingly. We further require $G$ to be anisotropic so that our Shimura variety is compact. 

Now we choose $p>n+1$ such that it splits completely in $F$ and $G$ is unramified at $p$. Then, the local data at $p$ fit into our local vanishing setup. By choosing $\iota:\qpb\xrightarrow{\sim}\cset$ appropriately, we can make $\mu$ nontrivial for certain factors.  Let $K=K^pK_p$ be a neat, compact, open subgroup that is hyperspecial at $p$, such that $Sh_K$ has a smooth integral model over $\mathcal{O}_{E_v}$ where $E$ is the reflex field and $v$ the prime over $p$ induced by $\iota$. 

\begin{rmk}
The condition that $p$ is split is not essential as discussed in \cref{first}.
\end{rmk}

We now recall the reformulation of Mantovan's formula due to \cite{daniels2024igusa} (first noticed by Koshikawa and then generalized by Hamann-Lee), i.e., the Newton stratification provides a filtration of the cohomology as follows:
\begin{thm}\cite[Thm 8.5.7]{daniels2024igusa}
There exists a filtration on $R\Gamma(\operatorname{Sh}_{K^p},\flb)$ by complexes of smooth representations of $G(\qp)\times W_{E_v}$, whose graded pieces are given by 
\begin{align*}
&(j^1)^*T_{\mu}(Rj^{b}_{!}j^{b*}R\bar{\pi}_{HT,*}\flb
[-d])(-d/2)\\\cong &\tensl{R\Gamma(Ig^b,\flb)^{\mathrm{op}}}{C_c(J_b(\qp))}{R\Gamma_c(\mathcal{M}_{(G,b,\mu),\infty},\flb(d_b))[2d_b]}\\
%$cong&\tensl{R\Gamma_c(Ig^b,\flb(d_b)^*)^{\mathrm{op}}}{C_c(J_b(\qp))}{R\Gamma_c(\mathcal{M}_{(G,b,\mu),\infty},\flb(d_b))}.
\end{align*}
\end{thm}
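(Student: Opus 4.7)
The plan is to derive the formula from the Newton stratification of $\mathrm{Bun}_G$ combined with the identification of the Shimura cohomology with a stalk of $T_\mu R\bar{\pi}_{HT,*}\flb$ via the Igusa stack diagram recalled in the introduction. Concretely, the Cartesian square involving $\operatorname{Sh}_{K^p}^{\circ,\diamond}$, $\mathrm{Igs}$, $\flg^{\diamond}$ and $\mathrm{Bun}_{G,\mu^{-1}}$, together with Fargues--Scholze's identification of $BL^*$ with the Hecke operator $T_\mu$ (up to shift and twist), furnishes an isomorphism
\begin{equation*}
R\Gamma(\operatorname{Sh}_{K^p},\flb) \;\simeq\; (j^1)^* T_\mu\bigl(R\bar{\pi}_{HT,*}\flb\bigr)[-d](-d/2),
\end{equation*}
where $d=\dim \flg$ and $j^1 : [\mathrm{pt}/G(\qp)] \hookrightarrow \mathrm{Bun}_G$ is the inclusion of the basic open stratum, corresponding to the trivial $G$-bundle.

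First, I would filter both sides via Newton strata. The decomposition $\mathrm{Bun}_G = \sqcup_{b \in B(G,\mu^{-1})} \mathrm{Bun}_G^b$ produces, through iterated excision along the locally closed immersions $j^b : \mathrm{Bun}_G^b \hookrightarrow \mathrm{Bun}_G$, a finite filtration of $R\bar{\pi}_{HT,*}\flb$ whose associated graded pieces are $Rj^b_! j^{b*} R\bar{\pi}_{HT,*}\flb$. Applying the triangulated functor $(j^1)^* T_\mu(-)[-d](-d/2)$ transports this to a filtration of $R\Gamma(\operatorname{Sh}_{K^p},\flb)$ whose associated graded pieces are exactly the left-hand side of the stated isomorphism.

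Second, I would identify each graded piece with the Igusa--local-Shimura tensor product. On $\mathrm{Bun}_G^b$, which is (essentially) $[\mathrm{pt}/\widetilde{J_b}]$, the restriction $j^{b*} R\bar{\pi}_{HT,*}\flb$ is computed by the fiber of $\bar{\pi}_{HT}$ over the point corresponding to $b$; by the construction of the Igusa stack this fiber is (up to $J_b^0$-torsor data) the Igusa variety $Ig^b$, and as an object on $[\mathrm{pt}/J_b(\qp)]$ it represents $R\Gamma(Ig^b,\flb)^{\mathrm{op}}$. On the other hand, the geometric Hecke operator $T_\mu$ at the trivial bundle, expressed via the moduli of local shtukas, takes a $J_b(\qp)$-equivariant sheaf to its derived tensor product over $C_c(J_b(\qp))$ with the $G(\qp)\times J_b(\qp)$-representation $R\Gamma_c(\mathcal{M}_{(G,b,\mu),\infty},\flb(d_b))[2d_b]$, where the shift and twist record the relative dimension of the Hecke correspondence at the $\mathrm{Bun}_G^b$-chart. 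Combining these two descriptions yields the formula.

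The main obstacle is the normalization bookkeeping: matching the shifts $[-d]$ and $[2d_b]$, the Tate twists $(-d/2)$ and $(d_b)$, and carefully accounting for the $J_b^0$-torsor factor that governs the discrepancy between $Ig^b$ viewed as a scheme over $\fpb$ and the Igusa stack viewed as a stack on $\mathrm{Bun}_G^b$. Reconciling the half-twists and the compatibility of the $W_{E_v}$-action on both sides is genuinely delicate; this is what is carried out in \cite[Thm 8.5.7]{daniels2024igusa}, building on Mantovan, Koshikawa, and Hamann--Lee. The sketch above only indicates the shape of the derivation, not its fine implementation.
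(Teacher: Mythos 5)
Your proposal is correct in outline and takes essentially the same route as the paper, which does not reprove the statement but cites \cite[Thm 8.5.7]{daniels2024igusa} and, in the remark that follows, sketches exactly the mechanism you describe: the excision filtration coming from the Newton stratification of $\mathrm{Bun}_{G,\mu^{-1}}$ together with the Cartesian diagram relating $\mathcal{M}_{(G,b,\mu),\infty}$, $\flg^b$ and $\mathrm{Bun}_{G,\mu^{-1}}^b=[*/\widetilde{J_b}]$, with the shift $[2d_b]$ accounted for by the connected part $J_b^0$. Like the paper, you correctly delegate the normalization of shifts, Tate twists, and the $W_{E_v}$-equivariance to the cited reference.
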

By taking $K_p$ invariants on both sides, we get the filtration for $R\Gamma(\operatorname{Sh}_{K},\flb)$.

\begin{rmk}
This actually equals to $R\Gamma_c(\flg^b,\rphtc)$ where $\flg^b$ is the stratum that corresponds to $b$ on the flag variety defined by \cite{caraiani2017generic}. 
\end{rmk}
\begin{rmk}
Note that the above theorem is a particular case of a general formula.

For general $V\in D(\mathrm{Bun}_{G}^b,\flb)$, we have that
$$(j^1)^*T_{\mu}(Rj^{b}_{!}V)[-d](-\frac d2)=\tensl{V^{\mathrm{op}}}{C_c(J_b(\qp))}{R\Gamma_c(\mathcal{M}_{(G,b,\mu),\infty},\flb(d_b))[2d_b]}.$$ 
%We also have the $K_p$-invariant version,
%$$R\Gamma(K_p,(j^1)^*T_{\mu}(Rj^{b}_{!}j^{b*}V))[-d](-\frac d2)=\tensl{V^{\mathrm{op}}}{C_c(J_b(\qp))}{R\Gamma_c(\mathcal{M}_{(G,b,\mu),K_p},\flb(d_b)[2d_b]}.$$
The formula essentially comes from the following Cartesian diagram:
$$\begin{tikzcd}
\mathcal{M}_{(G,b,\mu),\infty}\arrow[d] \arrow[r, "\pi_{HT}^{b}"] & \flg^b \arrow[d, "BL"] \\
*\arrow[r]         & \mathrm{Bun}_{G,\mu^{-1}}^b=[*/\widetilde{J_b}].   
\end{tikzcd}$$
One can see that the shift by $2d_b$ comes from the connected part $J_b^0$ of $\widetilde{J_b}$.
\end{rmk}

Now we assume that $\ell,p>n+1$.
\begin{cor}\label{bvanishi}
If $r_{\mu}\circ\rho_{\mathfrak m}$ does not satisfy $P_i$, then $R\Gamma_c(\flg^{b_i}/K_p,\rphtc)_{\mathfrak m}=0$.
\end{cor}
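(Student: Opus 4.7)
The plan is to reduce the asserted vanishing of $R\Gamma_c(\flg^{b_i}/K_p,\rphtc)_{\mathfrak m}$ to the local vanishing already established in \cref{ssig} (applied to $s=1$ and $I=\{i\}$, so that $|I|=i-1$ and the property $P_{|I|+1}$ becomes exactly $P_i$). The key point is that $H_{K_p}$ acts on the graded piece through the local Shimura variety factor of the derived tensor product coming from the Mantovan formula, so $\mathfrak m$-localization can be performed on that factor alone.

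First I would invoke the remark after \cite[Thm~8.5.7]{daniels2024igusa} to identify $R\Gamma_c(\flg^{b_i},\rphtc)$ with the $b_i$-graded piece of the filtration on $R\Gamma(\operatorname{Sh}_{K^p},\flb)$. Applying the displayed Mantovan-type formula and taking $K_p$-invariants on the local Shimura variety factor, this object becomes quasi-isomorphic, up to a cohomological shift and a Tate twist, to
$$R\Gamma(Ig^{b_i},\flb)^{\mathrm{op}}\otimes^{L}_{C_c(J_{b_i}(\qp))} R\Gamma_c(\mathcal{M}_{(G,b_i,\mu),K_p},\flb).$$
The action of $H_{K_p}$ on the left-hand side is matched, via the construction of the filtration, with the Hecke action on $\mathcal{M}_{(G,b_i,\mu),K_p}$ by correspondences. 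Since localization at $\mathfrak m$ is exact, it commutes with the derived tensor product, and the problem reduces to showing that $R\Gamma_c(\mathcal{M}_{(G,b_i,\mu),K_p},\flb)_{\mathfrak m}=0$.

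This last statement follows from the contrapositive of \cref{ssig}: if the localized complex were nonzero, some irreducible smooth $\flb$-representation $\pi$ of $J_{b_i}(\qp)$ would arise as a subquotient of $\ccoho k{\mathcal{M}_{(G,b_i,\mu),K_p}}{\flb}_{\mathfrak m}$ for some $k$. Then Koshikawa's matching theorem would give $\varphi_\pi=\rho_{\mathfrak m}$, and \cref{ssig} would force $r_\mu\circ\rho_{\mathfrak m}$ to satisfy $P_i$, contradicting the hypothesis. The main obstacle is the careful verification that $H_{K_p}$ really does act purely through the local Shimura variety factor of the Mantovan decomposition, which amounts to unwinding the Beauville--Laszlo base change underlying the filtration; once that compatibility is in hand, the argument is a formal consequence of exactness of localization together with the local input of Section \ref{sec3}.
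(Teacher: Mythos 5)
Your proposal is correct and follows essentially the same route as the paper's (implicit) argument: identify $R\Gamma_c(\flg^{b_i}/K_p,\rphtc)$ with the corresponding graded piece of the Mantovan filtration, note that $H_{K_p}$ acts through the local Shimura variety factor so that $\mathfrak m$-localization passes through the derived tensor product, and conclude from the local vanishing of Section~\ref{sec3} (your use of \cref{ssig} with $s=1$ is the unlabeled corollary the paper invokes). The compatibility of the Hecke action that you flag as the main obstacle is already built into the statement of the filtration as complexes of smooth $G(\qp)\times W_{E_v}$-representations, so no further verification is needed beyond what the cited theorem provides.
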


\begin{thm}\label{main}
In the case that only one place is of signature $(1,n)$, if there is no such $\alpha$ such that the multiset $\{\alpha,p\alpha,p^2\alpha,\dots,p^{r}\alpha\}$ is contained in the multiset of the eigenvalue of $r_{\mu}\circ\rho_{\mathfrak m}(\mathrm{Frob})$, then for $i\leq n-r$ or $i\geq n+r$,
$$\coho i{\operatorname{Sh}_K}{\flb}_{\mathfrak m}=0.$$ 
\end{thm}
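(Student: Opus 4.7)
The plan is to combine Mantovan's filtration with the local vanishing of \Cref{bvanishi} to kill most graded pieces, bound the degrees of the surviving ones by a dimension count on the Hodge--Tate period domain, and then invoke Poincar\'e duality for the lower half of the vanishing range.

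First, by the filtration recalled above, $R\Gamma(\operatorname{Sh}_K,\overline{\mathbb{F}}_\ell)$ admits a filtration whose graded pieces are $R\Gamma_c(\mathcal{F}\ell^{b_i}/K_p, R\pi_{HT,*}\overline{\mathbb{F}}_\ell)$, indexed by the $n+1$ Newton strata. The hypothesis says that no multiset $\{\alpha, p\alpha,\dots, p^r\alpha\}$ of length $r+1$ embeds in the Frobenius eigenvalues of $r_\mu\circ\rho_\mathfrak{m}$, which is exactly the failure of property $P_{r+1}$; since any longer arithmetic progression contains a length-$(r+1)$ one, $P_i$ also fails for every $i\ge r+1$. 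Applied at each such $i$, \Cref{bvanishi} kills the corresponding graded piece after localising at $\mathfrak{m}$, so only $b_1,\dots,b_r$ can contribute.

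For the surviving pieces I bound cohomological degrees as follows. For $U(1,n)$ the Hodge--Tate flag variety is $\mathbb{P}^n$, and the stratum $\mathcal{F}\ell^{b_i}$ has dimension $n-d_{b_i}=i-1$ (the basic stratum $\mathcal{F}\ell^{b_{n+1}}$ is the Drinfeld upper half space of dimension $n$, the most degenerate stratum $\mathcal{F}\ell^{b_1}$ is a pro-\'etale set of $\mathbb{Q}_p$-rational flags of dimension $0$, and the intermediate ones interpolate). The stalks of $R\pi_{HT,*}\overline{\mathbb{F}}_\ell$ on $\mathcal{F}\ell^{b_i}$ compute the \'etale cohomology of an Igusa variety of dimension $d_{b_i}=n-i+1$, which is affine, hence sit in cohomological degrees $[0, d_{b_i}]$. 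A hypercohomology spectral sequence then places $R\Gamma_c(\mathcal{F}\ell^{b_i}/K_p, R\pi_{HT,*}\overline{\mathbb{F}}_\ell)$ in degrees $[0,\, 2(i-1)+(n-i+1)]=[0,n+i-1]\subseteq [0,n+r-1]$ for each $i\le r$. Assembling the filtration gives $H^k(\operatorname{Sh}_K,\overline{\mathbb{F}}_\ell)_{\mathfrak{m}}=0$ for $k\ge n+r$.

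For $k\le n-r$ I will use Poincar\'e duality. Since $\operatorname{Sh}_K$ is smooth proper of dimension $n$, cup product gives a Hecke-equivariant isomorphism $(H^k(\operatorname{Sh}_K,\overline{\mathbb{F}}_\ell)_{\mathfrak{m}})^{\vee}\cong H^{2n-k}(\operatorname{Sh}_K,\overline{\mathbb{F}}_\ell(n))_{\mathfrak{m}^*}$, where $\mathfrak{m}^*$ is the Hecke eigensystem obtained from $\mathfrak{m}$ under the standard involution on $H_{K_p}$; on Satake parameters this inverts Frobenius eigenvalues. The hypothesis on $\mathfrak{m}$ is stable under this inversion: a progression $\{\alpha,p\alpha,\dots,p^r\alpha\}\subseteq\{\beta_j^{-1}\}$ translates, by the reindexing $\gamma=p^{-r}\alpha^{-1}$, into $\{\gamma,p\gamma,\dots,p^r\gamma\}\subseteq\{\beta_j\}$. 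Thus the hypothesis for $\mathfrak{m}$ implies the same for $\mathfrak{m}^*$, and applying the first half of the argument to $\mathfrak{m}^*$ yields $H^{2n-k}(\operatorname{Sh}_K,\overline{\mathbb{F}}_\ell(n))_{\mathfrak{m}^*}=0$ for $2n-k\ge n+r$, i.e.\ $H^k(\operatorname{Sh}_K,\overline{\mathbb{F}}_\ell)_{\mathfrak{m}}=0$ for $k\le n-r$.

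I expect the main obstacle to be in the second paragraph: carefully justifying in the adic/diamond setting the dimension identity $\dim \mathcal{F}\ell^{b_i}=i-1$ together with the Igusa-affineness bound on the stalks of $R\pi_{HT,*}\overline{\mathbb{F}}_\ell$, invoking the correct Caraiani--Scholze and Koshikawa inputs. A secondary subtlety in the duality step is matching the Hecke involution with the claimed inversion on Satake parameters, including any Tate twist arising from $\overline{\mathbb{F}}_\ell(n)$; any residual shift of eigenvalues by a power of $p$ can be absorbed into the choice of $\alpha$.
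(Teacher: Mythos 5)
Your proposal is correct and has the same skeleton as the paper's proof: Mantovan's filtration, \cref{bvanishi} to discard every stratum $b_i$ with $i\geq r+1$ (your observation that failure of $P_{r+1}$ forces failure of $P_i$ for all $i\geq r+1$ is exactly how the paper uses the hypothesis), an upper bound $n+i-1$ on the degrees of the surviving graded pieces, and duality for the range $i\leq n-r$. The one genuine difference is how the bound $[0,n+i-1]$ is produced. The paper works with the right-hand side of Mantovan's formula: $R\Gamma_c(\mathcal{M}_{(G,b_t,\mu),K_p},\flb)$ lies in $[0,2n]$ because the local Shimura variety at finite level is a smooth, partially proper rigid space of dimension $n$, the shift $[2d_{b_t}]$ moves the top degree to $2(t-1)$, and Artin vanishing for the affine Igusa variety adds at most $d_{b_t}=n+1-t$, giving $n+t-1$. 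You instead bound $R\Gamma_c(\flg^{b_i},\rphtc)$ directly on the flag variety via $\dim\flg^{b_i}=i-1$ together with stalkwise concentration of $\rphtc$ in $[0,d_{b_i}]$; this is the Caraiani--Scholze style estimate and yields the same number, but it needs two extra technical inputs that the paper's route avoids: the identification of the cohomology sheaves of $\rphtc$ over the stratum with Igusa cohomology, and the cohomological-dimension statement $H^j_c(\flg^{b_i},-)=0$ for $j>2(i-1)$ for these locally closed strata regarded as (locally spatial) diamonds --- both available in Caraiani--Scholze, as you anticipate, but requiring care, whereas the paper only quotes partial properness of $\mathcal{M}_{(G,b_t,\mu),K_p}$ and affineness of $Ig^{b_t}$. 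Your duality paragraph is a correct filling-in of the paper's terse ``by duality'': the stability of the hypothesis under inverting and uniformly rescaling the Frobenius eigenvalues (so that Tate twists and the normalization of the dual eigensystem are absorbed into $\alpha$) is precisely the paper's remark that the property $P_i$ is preserved under duality.
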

\begin{proof}
Since $\rho_{\mathfrak m}$ does not satisfy $P_{r+1}$, then by \cref{bvanishi}, we only need to consider the graded piece for $b_t$ for $t\leq r$. We want to analyze the degree bound of
$$\tensl{R\Gamma(Ig^{b_t},\flb)^{\mathrm{op}}}{C_c(J_{b_t}(\qp))}{R\Gamma_c(\mathcal{M}_{(G,b_t,\mu),K_p},\flb(n+1-t))[2(n+1-t)]}.$$

Since $\mathcal{M}_{(G,b_t,\mu),K_p}$ is a smooth and partially proper rigid analytic variety over $C$ of dimension $n$ by \cite[Prop 5.4]{rapoport2014towards}, we know that $R\Gamma_c(\mathcal{M}_{(G,b_t,\mu),K_p},\flb)$ lies in $[0,2n]$. By the argument in \cite[Prop 8.6.2]{daniels2024igusa}, we know that $Ig^{b_t}$ is affine. So, we have that $R\Gamma(Ig^{b_t},\flb)$ lies in $[0,n+1-t]$. So the whole complex lies in $[0,n+t-1]\subset[0,n+r-1]$. So, we see that $\coho i{\operatorname{Sh}_K}{\flb}_{\mathfrak m}=0$ for $i\geq n+r$. Then by duality, we have the result of the other direction.
\end{proof}

For general $s$, by the same argument with the notations above \cref{ssig}, we have
\begin{thm}\label{main2}
In general, if there is no such $\alpha$ such that the multiset $\{\alpha,p\alpha,p^2\alpha,\dots,p^{r}\alpha\}$ is contained in the multiset of the eigenvalues of $r_{\mu}\circ\rho_{\mathfrak m}(Frob)$, then $\coho i{\operatorname{Sh}_K}{\flb}_{\mathfrak m}=0$, for $i\leq ns-r$ or $i\geq ns+r$.
\end{thm}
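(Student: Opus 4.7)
The plan is to generalize the argument of \cref{main} directly, replacing $n$ by $N=ns$ throughout. Since each archimedean place of signature $(1,n)$ contributes $n$ to the dimension while places of signature $(0,n+1)$ contribute $0$, one has $\dim\operatorname{Sh}_K=ns$. The Newton stratification in this setting is indexed by $b_I=b_{i_1}\times\cdots\times b_{i_s}$ with $|I|=\sum_k i_k-s=\dim\flg^{b_I}$, and summing the single-factor formula $d_{b_t}=n+1-t$ over the $s$ factors gives $d_{b_I}=ns-|I|$. These dimension identities are what pin down the bookkeeping of the proof.

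First I would apply the Mantovan-style filtration on $R\Gamma(\operatorname{Sh}_K,\flb)_{\mathfrak m}$ whose graded piece at $b_I$ is
$$R\Gamma(Ig^{b_I},\flb)^{\mathrm{op}}\otimes^L_{C_c(J_{b_I}(\qp))}R\Gamma_c(\mathcal{M}_{(G,b_I,\mu),K_p},\flb(d_{b_I}))[2d_{b_I}]\text{,}$$
and invoke \cref{ssig} to kill the graded pieces with $|I|\geq r$: the failure of $P_{r+1}$ for $r_\mu\circ\rho_{\mathfrak m}$ forces the failure of $P_{|I|+1}$ whenever $|I|\geq r$, so the localised compactly supported cohomology of $\mathcal{M}_{(G,b_I,\mu),K_p}$ vanishes. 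Only graded pieces with $|I|\leq r-1$ then survive the localisation at $\mathfrak m$.

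Next I would bound the cohomological amplitude of each surviving graded piece exactly as in the proof of \cref{main}. The local Shimura variety $\mathcal{M}_{(G,b_I,\mu),K_p}$ is smooth and partially proper of dimension $N=ns$ by \cite[Prop 5.4]{rapoport2014towards}, placing its compactly supported cohomology in $[0,2ns]$; and a factorwise application of the affineness argument of \cite[Prop 8.6.2]{daniels2024igusa} shows $Ig^{b_I}$ is affine of dimension $d_{b_I}=ns-|I|$, hence $R\Gamma(Ig^{b_I},\flb)$ lies in $[0,ns-|I|]$. Combining these with the shift $[2d_{b_I}]$ just as in the proof of \cref{main} shows that the graded piece sits in $[0,ns+|I|]\subseteq[0,ns+r-1]$, giving $\coho i{\operatorname{Sh}_K}{\flb}_{\mathfrak m}=0$ for $i\geq ns+r$.

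For the lower bound I would use Poincaré duality on the smooth proper variety $\operatorname{Sh}_K$, which identifies $\coho i{\operatorname{Sh}_K}{\flb}_{\mathfrak m}$ (up to Tate twist) with the dual of $\coho{2ns-i}{\operatorname{Sh}_K}{\flb}_{\mathfrak m^\vee}$, where $\mathfrak m^\vee$ corresponds to the contragredient parameter. The failure of $P_{r+1}$ is preserved under inversion of eigenvalues (replace $\alpha$ by $p^{-r}\alpha^{-1}$), so the upper bound applies equally to $\mathfrak m^\vee$ and yields the desired vanishing for $i\leq ns-r$. The only real obstacle I anticipate is checking that affineness of the Igusa variety genuinely survives in the multi-factor setting — this should follow formally from the product structure of $b_I$ and the Kottwitz-type description of $Ig^{b_I}$, but it is the one place where the generalization is not purely cosmetic.
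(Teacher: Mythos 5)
Your proposal is correct and follows essentially the same route as the paper, which proves \cref{main2} simply by rerunning the argument of \cref{main} with the multi-factor bookkeeping ($d_{b_I}=ns-|I|$, amplitude $[0,ns+|I|]$ for the surviving graded pieces with $|I|\leq r-1$, then duality), exactly as you do. Your explicit dimension identities and the observation that failure of $P_{r+1}$ kills all strata with $|I|\geq r$ via \cref{ssig} are precisely the intended content of the paper's ``same argument'' remark, including the affineness of $Ig^{b_I}$ via the argument of \cite[Prop 8.6.2]{daniels2024igusa}.
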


\section*{Acknowledgments}
I am very grateful to my advisor, Prof.~Matthew Emerton, for suggesting this problem, for many insightful conversations, and for reading and improving earlier drafts of this note. I especially thank Prof.~Teruhisa Koshikawa for valuable comments and for many helpful explanations and clarifications of related material. I also thank Yuting (Samanda) Zhang, Jinyue Luo, Deding Yang, and Ray Li for discussions and comments that improved the exposition.

{\small
\nocite{*}
\bibliographystyle{alpha}
\bibliography{bib/bib}

@article{caraiani2017generic,
  author  = {Caraiani, Ana and Scholze, Peter},
  title   = {On the generic part of the cohomology of compact unitary {Shimura} varieties},
  journal = {Annals of Mathematics},
  volume  = {186},
  number  = {3},
  pages   = {649--766},
  year    = {2017}
}

@article{caraiani2024generic,
  author  = {Caraiani, Ana and Scholze, Peter},
  title   = {On the generic part of the cohomology of non-compact unitary {Shimura} varieties},
  journal = {Annals of Mathematics},
  volume  = {199},
  number  = {2},
  pages   = {483--590},
  year    = {2024}
}

@article{boyer2019torsion,
  author  = {Boyer, Pascal},
  title   = {Sur la torsion dans la cohomologie des vari{\'e}t{\'e}s de {Shimura} de {Kottwitz--Harris--Taylor}},
  journal = {Journal of the Institute of Mathematics of Jussieu},
  volume  = {18},
  number  = {3},
  pages   = {499--517},
  year    = {2019}
}

@misc{koshikawa2024non,
  author = {Koshikawa, Teruhisa and Shin, Sug Woo},
  title  = {On the non-generic part of the {$L^2$}-cohomology of locally symmetric spaces},
  year   = {2025},
  note   = {Preprint}
}

@misc{koshikawa2021generic,
  author = {Koshikawa, Teruhisa},
  title  = {On the generic part of the cohomology of local and global {Shimura} varieties},
  year   = {2021},
  note   = {Preprint, arXiv:2106.10602}
}

@article{koshikawa2020vanishing,
  author  = {Koshikawa, Teruhisa},
  title   = {Vanishing theorems for the mod $p$ cohomology of some simple {Shimura} varieties},
  journal = {Forum of Mathematics, Sigma},
  volume  = {8},
  pages   = {e38},
  year    = {2020}
}

@misc{zou2022categorical,
  author = {Zou, Konrad},
  title  = {The categorical form of {Fargues'} conjecture for tori},
  year   = {2022},
  note   = {Preprint, arXiv:2202.13238}
}

@misc{zou2025categorical,
  author = {Zou, Konrad},
  title  = {Categorical local {Langlands} for {$\mathrm{GL}_n$} for parameters of {Langlands--Shahidi} type with integral coefficients},
  year   = {2025},
  note   = {Preprint, arXiv:2504.06499}
}

@article{gaisin2016non,
  author  = {Gaisin, Ildar and Imai, Naoki},
  title   = {Non-semi-stable loci in {Hecke} stacks and {Fargues'} conjecture},
  journal = {Journal of the Institute of Mathematics of Jussieu},
  volume  = {25},
  number  = {1},
  pages   = {445--475},
  year    = {2026}
}

@incollection{tate1979number,
  author    = {Tate, John},
  title     = {Number theoretic background},
  booktitle = {Automorphic Forms, Representations and {$L$}-Functions, Part 2},
  series    = {Proceedings of Symposia in Pure Mathematics},
  volume    = {33},
  pages     = {3--26},
  publisher = {American Mathematical Society},
  year      = {1979}
}

@misc{fargues2021geometrization,
  author = {Fargues, Laurent and Scholze, Peter},
  title  = {Geometrization of the local {Langlands} correspondence},
  year   = {2021},
  note   = {Preprint, arXiv:2102.13459; accepted/in press at Soci{\'e}t{\'e} Math{\'e}matique de France}
}

@misc{zhang2023pel,
  author = {Zhang, Mingjia},
  title  = {A {PEL}-type {Igusa} stack and the $p$-adic geometry of {Shimura} varieties},
  year   = {2023},
  note   = {Preprint, arXiv:2309.05152}
}

@misc{daniels2024igusa,
  author = {Daniels, Patrick and van Hoften, Pol and Kim, Dongryul and Zhang, Mingjia},
  title  = {{Igusa} stacks and the cohomology of {Shimura} varieties},
  year   = {2024},
  note   = {Preprint, arXiv:2408.01348}
}

@misc{hamann2023torsion,
  author = {Hamann, Linus and Lee, Si-Ying},
  title  = {Torsion vanishing for some {Shimura} varieties},
  year   = {2023},
  note   = {Preprint, arXiv:2309.08705}
}

@misc{hamann2022geometric,
  author = {Hamann, Linus},
  title  = {Geometric {Eisenstein} series, intertwining operators, and {Shin's} averaging formula},
  year   = {2022},
  note   = {Preprint, arXiv:2209.08175}
}

@misc{yang2025generic,
  author = {Yang, Xiangqian and Zhu, Xinwen},
  title  = {On the generic part of the cohomology of {Shimura} varieties of abelian type},
  year   = {2025},
  note   = {Preprint, arXiv:2505.04329}
}

@article{zhu2017affine,
  author  = {Zhu, Xinwen},
  title   = {Affine {Grassmannians} and the geometric {Satake} in mixed characteristic},
  journal = {Annals of Mathematics},
  volume  = {185},
  number  = {2},
  pages   = {403--492},
  year    = {2017}
}

@misc{zhu2025tame,
  author = {Zhu, Xinwen},
  title  = {Tame categorical local {Langlands} correspondence},
  year   = {2025},
  note   = {Preprint, arXiv:2504.07482}
}

@article{caraiani2023etale,
  author  = {Caraiani, Ana and Tamiozzo, Matteo},
  title   = {On the {\'e}tale cohomology of {Hilbert} modular varieties with torsion coefficients},
  journal = {Compositio Mathematica},
  volume  = {159},
  number  = {11},
  pages   = {2279--2325},
  year    = {2023}
}

@article{minguez2014types,
  author  = {M{\'\i}nguez, Alberto and S{\'e}cherre, Vincent and Stevens, Shaun},
  title   = {Types modulo $\ell$ pour les formes int{\'e}rieures de {$\mathrm{GL}_n$} sur un corps local non archim{\'e}dien},
  journal = {Proceedings of the London Mathematical Society},
  volume  = {109},
  number  = {4},
  pages   = {823--891},
  year    = {2014}
}

@book{scholze2020berkeley,
  author    = {Scholze, Peter and Weinstein, Jared},
  title     = {Berkeley Lectures on $p$-adic Geometry},
  series    = {Annals of Mathematics Studies},
  volume    = {207},
  publisher = {Princeton University Press},
  year      = {2020}
}

@article{rapoport2014towards,
  author  = {Rapoport, Michael and Viehmann, Eva},
  title   = {Towards a theory of local {Shimura} varieties},
  journal = {M{\"u}nster Journal of Mathematics},
  volume  = {7},
  number  = {1},
  pages   = {273--326},
  year    = {2014}
}

@article{hansen2022kottwitz,
  author  = {Hansen, David and Kaletha, Tasho and Weinstein, Jared},
  title   = {On the {Kottwitz} conjecture for local shtuka spaces},
  journal = {Forum of Mathematics, Pi},
  volume  = {10},
  pages   = {e13},
  year    = {2022}
}

@misc{nguyen2023categorical,
  author = {Nguyen, Kieu Hieu},
  title  = {On categorical local {Langlands} program for {$\mathrm{GL}_n$}},
  year   = {2023},
  note   = {Preprint, arXiv:2309.16505}
}

@misc{nguyen2025categorical,
  author = {Nguyen, Kieu Hieu},
  title  = {Categorical local {Langlands} and torsion classes of some {Shimura} varieties},
  year   = {2025},
  note   = {Preprint, arXiv:2505.10724}
}

@incollection{vigneras1997propos,
  author    = {Vign{\'e}ras, Marie-France},
  title     = {A propos d'une conjecture de {Langlands} modulaire},
  booktitle = {Finite Reductive Groups: Related Structures and Representations},
  series    = {Progress in Mathematics},
  volume    = {141},
  pages     = {415--452},
  publisher = {Birkh{\"a}user Boston},
  year      = {1997}
}
}
\end{document}